\newtheorem{thm}{Theorem}[section]
\newtheorem{prp}[thm]{Proposition}
\newtheorem{cor}[thm]{Corollary}
\newtheorem{lem}[thm]{Lemma}
\theoremstyle{definition}
\newtheorem{df}{Definition}[section]
\newtheorem*{rem}{Remark}
\def\P{\mathcal{P}}
\def\S{\mathcal{S}}
\def\R{\mathbb{R}}
\def\N{\mathbb{N}}
\def\Q{\mathcal{Q}}
\def\F{\mathbb{F}}
\def\D{\mathbb{D}}
\def\1{\mathbbm{1}}
\newcommand{\bsj}{\boldsymbol{j}}
\newcommand{\bsk}{\boldsymbol{k}}
\newcommand{\bsm}{\boldsymbol{m}}
\newcommand{\bst}{\boldsymbol{t}}
\newcommand{\bsx}{\boldsymbol{x}}
\newcommand{\bsy}{\boldsymbol{y}}
\newcommand{\bsz}{\boldsymbol{z}}
\newcommand{\bszero}{\boldsymbol{0}}
\newcommand{\bsone}{\boldsymbol{1}}
\newcommand\dint{\,{\rm d}}
\newcommand{\ld}{{\rm ld}\,}
\DeclareMathOperator{\bmo}{BMO}
\begin{document}
\pagestyle{scrheadings}
\onehalfspacing

\title{Optimal $L_p$-discrepancy bounds for second order digital sequences}
\author{Josef Dick\thanks{This research was supported under Australian Research Council's Discovery Projects funding scheme (project number DP150101770).}, Aicke Hinrichs, Lev Markhasin, Friedrich Pillichshammer\thanks{F.P. is supported by the Austrian Science Fund (FWF): Project F5509-N26, which is a part of the Special Research Program "Quasi-Monte Carlo Methods: Theory and Applications".}}
\maketitle

\begin{abstract}
The $L_p$-discrepancy is a quantitative measure for the irregularity of distribution modulo one of infinite sequences. In 1986 Proinov proved for all $p>1$ a lower bound for the $L_p$-discrepancy of general infinite sequences in the $d$-dimensional unit cube, but it remained an open question whether this lower bound is best possible in the order of magnitude until recently. In 2014 Dick and Pillichshammer gave a first construction of an infinite sequence whose order of $L_2$-discrepancy matches the lower bound of Proinov. Here we give a complete solution to this problem for all finite $p > 1$. We consider so-called order $2$ digital $(t,d)$-sequences over the finite field with two elements and show that such sequences achieve the optimal order of $L_p$-discrepancy simultaneously for all $p \in (1,\infty)$.
\end{abstract}

\centerline{\begin{minipage}[hc]{130mm}{
{\em Keywords:} $L_p$-discrepancy, higher order digital sequences, quasi-Monte Carlo\\
{\em MSC 2010:} 11K06, 11K38}
\end{minipage}} 

\section{Introduction}
Let $d,N \in \mathbb{N}$ (where $\mathbb{N}=\{1,2,3,\ldots\}$) and let $\P_{N,d}$ be an $N$-element point set in the unit cube $[0,1)^d$. The discrepancy function of $\P_{N,d}$ is defined as
\begin{align}\label{deflocdisc}
D_{\P_{N,d} }(\bsx) = \frac{1}{N}\sum_{\bsz \in \P_{N,d}} \chi_{[\bszero,\bsx)}(\bsz) - x_1 \cdots x_d
\end{align}
where $\bsx = (x_1, \ldots, x_d) \in [0,1]^d$ and $[\bszero,\bsx)=[0,x_1)\times\ldots\times[0,x_d)$. By $\chi_A$ we mean the characteristic function of a set $A\in\R^d$, i.e., $\chi_A(\bsx)=1$ if $\bsx \in A$ and 0 if $\bsx \not\in A$. The term $\sum_{\bsz} \chi_{[\bszero,\bsx)}(\bsz)$ in \eqref{deflocdisc} is equal to the number of points of $\P_{N,d}$ in $[\bszero,\bsx)$. Hence, $D_{\P_{N,d}}$ is a normalized measure for the deviation of the proportion of the number of points of $\P_{N,d}$ in $[\bszero, \bsx)$ from the `fair' or `expected' proportion of the number of points $ \lambda_d([\bszero,\bsx)) = x_1 \cdots x_d$ in this interval under the assumption of a perfect uniform distribution. Here $\lambda_d$ denotes the $d$-dimensional Lebesgue measure.

The $L_p$-discrepancy of $\P_{N,d}$ is defined as the $L_p$-norm of the discrepancy function, i.e., 
\[ L_{p,N}(\P_{N,d}) = \|D_{\P_{N,d}}|L_p([0,1]^d)\| \ \ \ \ \ \mbox{ for $p \in [1,\infty]$.} \] 

For an infinite sequence $\S_d$ in $[0,1)^d$ and $N \in \mathbb{N}$ the discrepancy function is defined as $D_{\S_d}^N(\bsx)=D_{\P_{N,d}}(\bsx)$, where the point set $\P_{N,d}$ consists of the first $N$ terms of $\S_d$, and the $L_p$-discrepancy of $\S_d$ is defined as
\[ L_{p,N}(\S_d) = \|D_{\S_d}^N|L_p([0,1]^d)\| \ \ \ \ \ \mbox{ for $p \in [1,\infty]$.} \]

The $L_p$-discrepancy is a quantitative measure for the irregularity of distribution of finite point sets and of infinite sequences. We refer to \cite{DT,KN} for extensive introductions to this topic. It is well known that a sequence $\S_d$ is uniformly distributed modulo one in the sense of Weyl \cite{W} if and only if $L_{p,N}(\S_d)$ tends to zero for $N \rightarrow \infty$. The $L_p$-discrepancy is also closely related to the worst-case integration error in certain function spaces using quasi-Monte Carlo algorithms via variants of the Koksma-Hlawka inequality. This follows immediately from Hlawka's identity (which is also sometimes attributed to Zaremba); see \cite{hl61,zar} or also \cite{DP10,LP14,NW08}.

The conceptual difference between the discrepancy of finite point sets and infinite sequences can be explained in the following way (cf. \cite{Mat99}): while for finite point sets we are interested in the behavior of the whole set $\{\bsx_0,\bsx_1,\ldots,\bsx_{N-1}\}$ with a fixed number of elements $N$, for infinite sequences we are interested in the discrepancy of all initial segments $\{\bsx_0\}$, $\{\bsx_0,\bsx_1\}$, $\{\bsx_0,\bsx_1,\bsx_2\}$, \ldots, $\{\bsx_0,\bsx_1,\bsx_2,\ldots,\bsx_{N-1}\}$, where $N=2,3,4,\ldots$. In this sense the discrepancy of finite point sets can be viewed as a static setting and the discrepancy of infinite sequences as a dynamic setting. Very often the dynamic setting in dimension $d$ is related to the static setting in dimension $d+1$ (see, for example, \cite[Chapter~2.2, Theorem~2.2, Example~2.2]{KN}). This will also be confirmed by our results.\\

It is well known that for every $p \in (1,\infty]$ and every $d \in \mathbb{N}$ there exists a positive constant $c_{p,d}$ with the following property:  for every finite $N$-element point set $\P_{N,d}$ in $[0,1)^d$ with $N \ge 2$ we have 
\begin{equation}\label{lbdlpdipts}
L_{p,N}(\P_{N,d}) \ge c_{p,d} \frac{(\log N)^{\frac{d-1}{2}}}{N}.
\end{equation}
This has been first shown in a celebrated paper by Roth~\cite{R54} for $p \ge 2$ and by Schmidt~\cite{S77} for $p \in (1,2)$. As shown by Hal\'{a}sz~\cite{H81} the estimate is also true for $p=1$ and $d=2$, i.e.,  there exists a positive constant $c_{1,2}$ with the following property:  for every finite $N$-element point set $\P_{N,2}$ in $[0,1)^2$ with $N \ge 2$ we have 
\begin{equation}\label{lbdl1D2dipts}
L_{1,N}(\P_{N,2}) \ge c_{1,2} \frac{(\log N)^{\frac{1}{2}}}{N}.
\end{equation} 

Later Proinov \cite{P86} (see also \cite{DP14b} for a proof) extended these results to infinite sequences: for every $p \in(1,\infty]$ and every $d \in \mathbb{N}$, $d \ge 2$, there exists a positive constant $c_{p,d}$ with the following property: for every infinite sequence $\S_d$ in $[0,1)^d$ we have 
\begin{equation}\label{lbdlpdiseq}
L_{p,N}(\S_d) \ge c_{p,d} \frac{(\log N)^{\frac{d}{2}}}{N} \ \ \ \ \mbox{ for infinitely many $N \in \mathbb{N}$.}
\end{equation}
For $d=1$ this estimate is also valid for $p=1$, i.e., there exists a positive constant $c_{1,1}$ with the following property:  for every infinite sequence $\S_1$ in $[0,1)$ we have 
\begin{equation}\label{lbdl1D1diseq}
L_{1,N}(\S_1) \ge c_{1,1} \frac{(\log N)^{\frac{1}{2}}}{N}.
\end{equation}
This can be shown by combining Proinov's method \cite{P86} (see also \cite{DP14b}) with the result of Hal\'{a}sz \eqref{lbdl1D2dipts}.

The lower bound \eqref{lbdlpdipts} for finite point sets is known to be best possible in the order of magnitude in $N$, i.e., for every $d,N \in \mathbb{N}$, $N \ge 2$, one can find an $N$-element point set $\P_{N,d}$ in $[0,1)^d$ with $L_p$-discrepancy of order 
\begin{equation}\label{uplpps}
L_{p,N}(\P_{N,d}) \ll_{p,d} \frac{(\log N)^{\frac{d-1}{2}}}{N}.
\end{equation}
For functions $f,g:D \subseteq \mathbb{N} \rightarrow \mathbb{R}$ with $g \ge 0$ we write $f(N) \ll g(N)$ if there exists some $C>0$ such that $f(N) \le C g(N)$ for all $N \in D$. If we want to stress that $C$ depends on some parameters, say $a,b$, then we indicate this by writing $f(N) \ll_{a,b} g(N)$. If we have $f(N) \ll_{a,b} g(N)$ and $g(N) \ll_{a,b} f(N)$ then we write $f(N) \asymp_{a,b} g(N)$.

The result in \eqref{uplpps} was proved by Davenport \cite{D56} for $p= 2$, $d= 2$, by Roth \cite{R80} for $p= 2$ and arbitrary $d$ and finally by Chen \cite{C80} in the general case. Other proofs were found by Frolov~\cite{Frolov}, Chen~\cite{C83}, Dobrovol'ski{\u\i}~\cite{Do84}, Skriganov~\cite{Skr89, Skr94}, Hickernell and Yue~\cite{HY00}, and Dick and Pillichshammer~\cite{DP05b}. For more details on the history of the subject see the monograph \cite{BC}. Apart from Davenport, who gave an explicit construction in dimension $d=2$, these results are pure existence results and explicit constructions of point sets where not known until the beginning of this millennium. First explicit constructions of point sets with optimal order of $L_2$-discrepancy have been provided in 2002 by Chen and Skriganov \cite{CS02} for $p=2$ and in 2006 by Skriganov \cite{S06} for general $p$. Other explicit constructions are due to Dick and Pillichshammer \cite{DP14a} for $p=2$, and Dick \cite{D14} and Markhasin \cite{M15} for 
general $p$.

It is also known that the lower bound \eqref{lbdlpdiseq} for infinite sequences is best possible for the particular case $p \in (1,2]$ (and also for $p=1$ when $d=1$). This was first shown by Dick and Pillichshammer~\cite{DP14a} who gave an explicit construction of sequences whose $L_2$-discrepancy achieves the order of magnitude $(\log N)^{d/2}/N$ for all $N \ge 2$. For dimension $d=1$ there exists a simple construction of a sequence with optimal order of $L_p$-discrepancy for all $p \in [1,\infty)$. Let $\mathcal{V}=(y_n)_{n \ge 0}$ be the van der Corput sequence (in base $2$), i.e., $y_n=\sum_{j\ge 0} \tfrac{n_j}{2^{j+1}}$ whenever $n \in \mathbb{N}_0$ has binary expansion $n=\sum_{j \ge 0} n_j 2^j$ with digits $n_j \in \{0,1\}$ (which is of course finite). Then let $\mathcal{V}^{{\rm sym}}=(z_n)_{n \ge 0}$ be the so-called symmetrized van der Corput sequence given by $z_{2n}=y_n$ and $z_{2n+1}=1-y_n$ for $n \in \mathbb{N}_0$. Then it has been shown in \cite{KP15} that for all $p \in [1,\infty)$ we have 
$$L_{p,N}
 (\mathcal{V}^{{\rm sym}}) \ll_p \frac{(\log N)^{\frac{1}{2}}}{N}\ \ \ \ \mbox{ for all $N\ge 2$.}$$ A generalization of this result to van der Corput sequences in arbitrary base $b \ge 2$ has been shown quite recently by Kritzinger \cite{K15}. See also the recent survey article \cite{fkp15} and the references therein for more information about symmetrized van der Corput sequences.\\

In this paper we provide explicit constructions of infinite sequences in arbitrary dimensions $d$ whose $L_p$-discrepancy is of order of magnitude $(\log N)^{d/2}/N$ for all $p \in [1, \infty)$. Thereby we prove that the lower estimate \eqref{lbdlpdiseq} is best possible for all $p \in (1,\infty)$.

The following is the main result of this work.
\begin{thm} \label{main_result}
There exists an explicit construction of an infinite sequence $\S_d$ in $[0,1)^d$ with the property that  
\begin{align*}
 L_{p,N}(\S_d) \ll_{p,d} \frac{\left(\log N\right)^{\frac{d}{2}}}{N}\ \ \ \ \mbox{ for all $N\ge 2$ and all $1 \le p < \infty$.}
\end{align*}
\end{thm}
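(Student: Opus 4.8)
The plan is to construct $\S_d$ as a so-called order $2$ digital $(t,d)$-sequence over $\F_2$ and to establish the upper bound by transferring the sequence problem in dimension $d$ to a point-set problem in dimension $d+1$, exploiting the well-known correspondence between the dynamic and static settings alluded to in the introduction. Concretely, I would first fix $N \ge 2$ and write $N = \sum_{i} 2^{c_i}$ in binary; the initial segment consisting of the first $N$ points of a digital sequence decomposes into dyadic blocks, each of which is (up to a digital shift) a digital net of the appropriate size. This reduces the estimate for $L_{p,N}(\S_d)$ to summing $L_p$-discrepancy contributions of finitely many digital nets, and it suffices to bound each block uniformly in such a way that the geometric decay of the block sizes controls the total.

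Next I would set up the analysis in the Walsh function system, which is the natural harmonic-analytic tool for digital nets and sequences over $\F_2$. The discrepancy function of each block has a Walsh series whose coefficients can be read off from the dual net structure; the defining property of an order $2$ digital $(t,d)$-sequence is precisely that it annihilates Walsh coefficients indexed by frequencies of small weighted degree, with the weighting designed to match the $L_2$ (and more generally $L_p$) bound one is aiming for. I would therefore recall or establish the relevant Littlewood--Paley-type inequality for the Walsh system, which allows one to pass from the $L_p$-norm of the discrepancy function to a square function built out of its dyadic (Walsh) blocks, thereby making the $L_p$ estimate essentially uniform in $p \in (1,\infty)$. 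The order $2$ condition is what upgrades the available cancellation from the first-order $(\log N)^{(d-1)/2}$ behavior of classical nets to the stronger cancellation needed here, and it is the feature that makes the construction work simultaneously for all $p$.

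The key technical step is the estimate of the square-function norm: one must show that the sum over dyadic frequency blocks of the squared Walsh-block norms is bounded by a constant times $(\log N)^{d}$, so that taking square roots yields the desired exponent $d/2$. This amounts to a careful counting argument over the frequencies $\bsk$ contributing at each scale, weighted by the number of nonzero digits and controlled by the $t$-value of the sequence, together with the $\log N$ factor arising because the dynamic setting in dimension $d$ corresponds to the static setting in dimension $d+1$. I expect the main obstacle to lie here: one needs the bound on each dyadic block to be summable and uniform, and the delicate point is handling the interaction between the geometric block decomposition coming from the binary digits of $N$ and the Walsh-frequency decomposition coming from the net structure, so that no logarithmic factors are lost. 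Once this combinatorial estimate is in place, the Littlewood--Paley inequality delivers the bound for all finite $p$ with an implied constant depending only on $p$ and $d$, completing the proof of Theorem~\ref{main_result}.
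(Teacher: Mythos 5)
Your overall architecture matches the paper's: you take an order $2$ digital $(t,d)$-sequence, split the first $N$ points into dyadic blocks according to the binary digits of $N$ (each block being, up to a digital shift, an order $2$ digital net), and reduce the $L_p$-bound to a Littlewood--Paley square-function estimate that is uniform in $p\in(1,\infty)$. However, there are two substantive gaps. First, you set up the harmonic analysis in the Walsh system, whereas the paper deliberately works with the \emph{Haar} system (Proposition~\ref{haarlpnorm}); this is not a cosmetic choice. The Walsh-based route is exactly the one followed in \cite{DP14a} (which required order $\alpha=5$ and only treated $p=2$) and in \cite{D14} (which the paper describes as giving a weaker result). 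Moreover, your claim that the order $2$ property ``annihilates Walsh coefficients of small weighted degree'' is a statement about the dual net and the integration error, not about the discrepancy function itself, whose coefficients must be \emph{estimated}, not shown to vanish. The engine of the paper's proof is the quantitative Haar-coefficient bound for digitally shifted order $2$ nets, $|\langle D_{\P_{2^n,d}},h_{\bsj,\bsm}\rangle| \ll 2^{-2n+t}(2n-t-2|\bsj|)^{d-1}$ (Lemma~\ref{haarcoeffdignets}, imported from \cite{M15}), together with the elementary bounds of Lemmas~\ref{lem_haar_coeff_vol} and \ref{lem_haar_coeff_number} for the small intervals; nothing in your proposal supplies a Walsh analogue of this input.

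Second, the step you yourself flag as the main obstacle --- combining the block decomposition with the frequency decomposition ``so that no logarithmic factors are lost'' --- is precisely where the naive version of your plan fails. If you bound each block's $L_p$-discrepancy by the static rate $(\log 2^{n_\mu})^{(d-1)/2}/2^{n_\mu}$ and sum the up to $\log N$ blocks by the triangle inequality in $L_p$, you obtain only $(\log N)^{(d+1)/2}/N$, losing a half power of $\log N$. The paper avoids this by applying the triangle inequality at the level of individual Haar coefficients \emph{inside} the square function: for a fixed $\bsj$ with $n_\mu \le |\bsj|+t/2 < n_{\mu+1}$, the blocks with index $\kappa>\mu$ contribute through Lemma~\ref{haarcoeffdignets} a geometrically decaying series, while the earlier blocks are handled by the crude counting bound; see \eqref{asdf}. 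The final summation over $\mu$ then telescopes to a single extra factor of $\log N$ at the level of the \emph{squared} norm, i.e.\ $(\log N)^{1/2}$ for the norm itself, which is exactly the gap between the static exponent $(d-1)/2$ and the dynamic exponent $d/2$. Without this mechanism (or an equivalent one) your plan does not close; with it, and with the Haar-coefficient lemma in place, the rest of your outline is essentially the paper's proof of Theorem~\ref{thm_main2}.
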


A more concrete version of the main result will be stated in Section~\ref{sec_exp_constr} as Theorem~\ref{thm_main2} and proved in Section~\ref{sec_proof_main}. For $p=2$ this result has been shown in \cite{DP14a} (but with a more complex construction - see the discussion after Theorem~\ref{thm_main2}) and a weaker result can be found in \cite{D14}. 


Proinov's lower bound \eqref{lbdlpdiseq} and Theorem~\ref{main_result} give the precise behavior of the $L_p$-discrepancy for $1 < p < \infty$. On the other hand, the $L_\infty$-discrepancy remains elusive. We have constructions of infinite sequences $\S_d$ in $[0,1)^d$ (for example order 1 digital $(t,d)$-sequences as presented in Section~\ref{sec2}, see \cite{DP10,N87,N92}) such that $$L_{\infty,N}(\S_d) \ll_d \frac{(\log N)^d}{N}.$$ As to lower bounds, we know that there exists some $c_d >0$ and $\eta_d\in (0,\tfrac{1}{2})$ such that for every sequence $\S_d$ in $[0,1)^d$ we have $$L_{\infty,N}(\S_d) \ge c_d \frac{(\log N)^{\frac{d}{2}+\eta_d}}{N} \ \ \ \mbox{for infinitely many $N \in \mathbb{N}$.}$$ This result follows from a corresponding result for finite point sets by Bilyk, Lacey and Vagharshakyan~\cite{BLV08}. For growing $d$ the exponent $\eta_d$ in this estimate tends to zero.

In dimension $d=1$ we even know that for every sequence $\S_1 \in [0,1)$ we have $$L_{\infty,N}(\S_1) \ge c \frac{\log N}{N} \ \ \ \mbox{for infinitely many $N \in \mathbb{N}$}$$ for some positive $c$. This is a famous result of Schmidt~\cite{S72} (see also \cite{B82,L14}). Since the $L_{\infty}$-discrepancy of the van der Corput sequence is of order $(\log N)/N$ the exact order of the $L_{\infty}$-discrepancy of infinite sequences in dimension $d=1$ is known. However, the quest for the exact order of the $L_{\infty}$-discrepancy in the multivariate case is a very demanding open question.\\

It is a natural question to ask what happens in intermediate spaces ``close'' to $L_{\infty}$. Standard examples of such spaces are $\bmo$-spaces and exponential Orlicz spaces. For point sets, the norm of the discrepancy function in these spaces was studied in \cite{BLPV09,BM15}. The methods of this paper can also be used to give sharp bounds for sequences. This will be the subject of a follow-up paper \cite{DHMP16}. 

Moreover, it is well-known that norms of the discrepancy function are intimately connected to integration errors of the corresponding quasi-Monte Carlo rules. That is the reason for recent work on the discrepancy function in function spaces like Sobolev spaces, Besov spaces and Triebel-Lizorkin spaces of dominating mixed smoothness, see \cite{H10,M13a,M13b,M13c,M15,T10,T10a}. Again the methods of our paper can be used to give sharp bounds for sequences. This will also be treated in \cite{DHMP16}. \\

The explicit construction in Theorem~\ref{main_result} is based on linear algebra over the finite field $\mathbb{F}_2$. In the subsequent section we provide a detailed introduction to the infinite sequences which lead to the optimal discrepancy bounds.

\section{Digital nets and sequences}\label{sec2}

\subsection{The digital construction scheme according to Niederreiter}

The concepts of digital nets and sequences were introduced by Niederreiter~\cite{N87} in 1987. These constructions are based on linear algebra over $\mathbb{F}_b$, the finite field of prime-power order $b$. A detailed overview of this topic is given in the books \cite{DP10,N92} (see also \cite[Chapter~5]{LP14}). Here we restrict ourselves to the case $b=2$. Let $\mathbb{F}_2$ be the finite field of order 2. We identify $\mathbb{F}_2$ with $\{0,1\}$ equipped with arithmetic operations modulo 2.

First we recall the definition of digital nets according to Niederreiter, which we present here in a slightly more general form. For $n,q,d \in \N$ with $q \ge n$ let $C_1,\ldots, C_d \in \mathbb{F}_2^{q \times n}$ be $q \times n$ matrices over $\mathbb{F}_2\cong \{0,1\}$ (originally one uses $n \times n$ matrices). For $k \in \{0,\ldots ,2^n-1\}$ with dyadic expansion $k = k_0 + k_1 2 + \cdots + k_{n-1} 2^{n-1}$, where $k_j \in \{0,1\}$, we define the dyadic digit vector $\vec{k}$ as $\vec{k} = (k_0, k_1, \ldots, k_{n-1})^\top \in \mathbb{F}_2^n$ (the symbol $\top$ means the transpose of a vector or a matrix; hence $\vec{k}$ is a column-vector). Then compute
\begin{equation}\label{matrix_vec_net}
C_j \vec{k} =:(x_{j,k,1}, x_{j,k,2},\ldots,x_{j,k,q})^\top \quad \mbox{for } j = 1,\ldots, d,
\end{equation}
where the matrix vector product is evaluated over $\mathbb{F}_2$, and put
\begin{equation*}
x_{j,k} = x_{j,k,1} 2^{-1} + x_{j,k,2} 2^{-2} + \cdots + x_{j,k,q} 2^{-q} \in [0,1).
\end{equation*}
The $k$-th point $\boldsymbol{x}_k$ of the net $\P_{2^n,d}$ is given by $\boldsymbol{x}_k = (x_{1,k}, \ldots, x_{d,k})$. A net $\P_{2^n,d}$ constructed this way is called a {\it digital net (over $\mathbb{F}_2$) with generating matrices} $C_1,\ldots,C_d$. Note that a digital net consists of $2^n$ elements in $[0,1)^d$.

A variant of digital nets are so-called {\it digitally shifted digital nets}. Here one chooses $(\vec{\sigma}_1,\ldots,\vec{\sigma}_d) \in (\mathbb{F}_2^{\mathbb{N}})^d$ with $\vec{\sigma}_j=(\sigma_{j,1},\sigma_{j,2},\ldots)^{\top} \in \mathbb{F}_2^{\mathbb{N}}$ with all but finitely many components different from zero and replaces \eqref{matrix_vec_net} by
 \begin{equation*}\label{matrix_vec_net2}
C_j \vec{k} +\vec{\sigma}_j =:(x_{j,k,1}, x_{j,k,2},x_{j,k,3},\ldots,)^\top \in \mathbb{F}_2^{\mathbb{N}} \quad \mbox{for } j = 1,\ldots, d,
\end{equation*}
and puts
\begin{equation*}
x_{j,k} = x_{j,k,1} 2^{-1} + x_{j,k,2} 2^{-2} + x_{j,k,3} 2^{-3} +\cdots  \in [0,1).
\end{equation*}

We also recall the definition of digital sequences according to Niederreiter, which are infinite versions of digital nets. Let $C_1,\ldots, C_d \in \mathbb{F}_2^{\mathbb{N} \times \mathbb{N}}$ be $\mathbb{N} \times \mathbb{N}$ matrices over $\mathbb{F}_2$. For $C_j = (c_{j,k,\ell})_{k, \ell \in \mathbb{N}}$ we assume that for each $\ell \in \mathbb{N}$ there exists a $K(\ell) \in \mathbb{N}$ such that $c_{j,k,\ell} = 0$ for all $k > K(\ell)$. For $k \in \N_0$ with dyadic expansion $k = k_0 + k_1 2 + \cdots + k_{m-1} 2^{m-1} \in \mathbb{N}_0$,  define the infinite dyadic digit vector of $k$ by $\vec{k} = (k_0, k_1, \ldots, k_{m-1}, 0, 0, \ldots )^\top \in \mathbb{F}_2^{\mathbb{N}}$. Then compute
\begin{equation}\label{eq_dig_seq}
C_j \vec{k}=:(x_{j,k,1}, x_{j,k,2},\ldots)^\top \quad \mbox{for } j = 1,\ldots, d,
\end{equation}
where the matrix vector product is evaluated over $\mathbb{F}_2$, and put
\begin{equation*}
x_{j,k} = x_{j,k,1} 2^{-1} + x_{j,k,2} 2^{-2} + \cdots \in [0,1).
\end{equation*}
The $k$-th point $\boldsymbol{x}_k$ of the sequence $\S_d$ is given by $\boldsymbol{x}_k = (x_{1,k}, \ldots, x_{d,k})$. A sequence $\S_d$ constructed this way is called a {\it digital sequence (over $\mathbb{F}_2$) with generating matrices} $C_1,\ldots,C_d$. Note that since $c_{j,k,\ell}=0$ for all $k$ large enough, the numbers $x_{j,k}$ are always dyadic rationals, i.e., have a finite dyadic expansion. 

The variant of {\it digitally shifted digital sequences} is defined in the same way as was done for digitally shifted digital nets.

\subsection{Higher order nets and sequences}\label{sec_honetssequ}

Our approach is based on higher order digital nets and sequences constructed explicitly in \cite{D07,D08}. We state here simplified versions of these definitions which are sufficient for our purpose. 

The distribution quality of digital nets and sequences depends on the choice of the respective generating matrices. In the following definitions we put some restrictions on $C_1,\ldots ,C_d$ with the aim to quantify the quality of equidistribution of the digital net or sequence.

\begin{df}\rm\label{def_net}
Let $n, q, \alpha \in \N$ with $q \ge \alpha n$ and let $t$ be an integer such that $0 \le t \le \alpha n$. Let $C_1,\ldots, C_d \in \mathbb{F}_2^{q \times n}$. Denote the $i$-th row vector of the matrix $C_j$ by $\vec{c}_{j,i} \in \mathbb{F}_2^n$. If for all $1 \le i_{j,\nu_j} < \ldots <
i_{j,1} \le q$ with $$\sum_{j = 1}^d \sum_{l=1}^{\min(\nu_j,\alpha)} i_{j,l}  \le
\alpha n - t$$ the vectors
$$\vec{c}_{1,i_{1,\nu_1}}, \ldots, \vec{c}_{1,i_{1,1}}, \ldots,
\vec{c}_{d,i_{d,\nu_d}}, \ldots, \vec{c}_{d,i_{d,1}}$$ are linearly independent
over $\mathbb{F}_2$, then the digital net with generating matrices
$C_1,\ldots, C_d$ is called an {\it order $\alpha$ digital $(t,n,d)$-net over $\mathbb{F}_2$}.
\end{df}

The case $\alpha=1$ corresponds to the classical case of $(t,n,d)$-nets according to Niederreiter's definition in  \cite{N87}.

Next we consider digital sequences for which the initial segments are order $\alpha$ digital $(t,n,d)$-nets over $\mathbb{F}_2$.

\begin{df}\rm\label{def_seq}
Let $\alpha \in \N$ and let $t \ge 0$ be an integer. Let $C_1,\ldots, C_d \in \mathbb{F}_2^{\mathbb{N} \times \mathbb{N}}$ and let $C_{j, \alpha n \times n}$ denote the left upper $\alpha n \times n$ submatrix of  $C_j$. If for all $n > t/\alpha$ the matrices $C_{1, \alpha n \times n},\ldots, C_{d, \alpha n \times n}$ generate an order $\alpha$ digital $(t,n,d)$-net over $\mathbb{F}_2$, then the digital sequence with generating matrices $C_1,\ldots, C_d$ is called an {\it order $\alpha$ digital $(t,d)$-sequence over $\mathbb{F}_2$}.
\end{df}

Again, the case $\alpha=1$ corresponds to the classical case of $(t,d)$-sequences according to Niederreiter's definition in \cite{N87}.

From Definition~\ref{def_net} it is clear that if $\P_{2^n,d}$ is an order $\alpha$ digital $(t,n,d)$-net, then for any $t \le t' \le \alpha n$, $\P_{2^n,d}$ is also an order $\alpha$ digital $(t',n,d)$-net. An analogous result also applies to higher order digital sequences.

Note that a digital net can be an order $\alpha$ digital $(t,n,d)$-net over $\mathbb{F}_2$ and at the same time an order $\alpha'$ digital $(t',n,d)$-net over $\mathbb{F}_2$ for $\alpha'\not = \alpha$. This means that the quality parameter $t$ may depend on $\alpha$ (i.e., $t=t(\alpha)$). The same holds for digital sequences. In particular \cite[Theorem~4.10]{D08} implies that an order $\alpha$ digital $(t,n,d)$-net is an order $\alpha'$ digital $(t',n,d)$-net for all $1 \le \alpha' \le \alpha$ with
\begin{equation}\label{eq_t_tprime}
t' = \lceil t \alpha'/\alpha \rceil \le t.
\end{equation}
The same result applies to order $\alpha$ digital $(t,d)$-sequences which are also order $\alpha'$ digital $(t',d)$-sequences with $1 \le \alpha' \le \alpha$ and $t'$ as above. In other words, $t(\alpha') = \lceil t(\alpha) \alpha'/\alpha \rceil$ for all $1 \le \alpha' \le \alpha$. More information can be found in \cite[Chapter~15]{DP10}.

In \cite{DP14a} it has been shown that every order $\alpha$ digital $(t,d)$-sequence over $\mathbb{F}_2$ with $\alpha \ge 5$ has optimal order of the $L_2$-discrepancy. In this paper we show that even order $2$ digital $(t,d)$-sequences over $\mathbb{F}_2$ achieve the optimal order of magnitude in $N$ of the $L_p$-discrepancy for all $p \in(1,\infty)$.  

Higher order digital nets and sequences have also a geometrical interpretation. Roughly speaking the definitions imply that special intervals or unions of intervals of prescribed volume contain the correct share of points with respect to a perfect uniform distribution. See \cite{N92,DP10} for the classical case $\alpha=1$ and \cite{DB09} or \cite{DP10,DP14a} for general $\alpha$. See also Lemma~\ref{fairint} below.

\subsection{Explicit constructions of order 2 digital sequences}\label{sec_exp_constr}

Explicit constructions of order $\alpha$ digital nets and sequences have been provided by Dick~\cite{D07,D08}. For our purposes it suffices to consider only $\alpha=2$. 

Let $C_1, \ldots, C_{2 d}$ be generating matrices of a digital net or sequence and let $\vec{c}_{j,k}$ denote the $k$-th row of $C_j$. Define matrices $E_1,\ldots, E_d$, where the $k$-th row of $E_j$ is given by $\vec{e}_{j,k}$, in the following way. For all $j \in \{1,2,\ldots,d\}$, $u \in \N_0$ and $v \in \{1,2\}$ let
\begin{equation*}
\vec{e}_{j,2 u + v} = \vec{c}_{2 (j-1) + v, u+1}.
\end{equation*}
We illustrate the construction for $d=1$. Then 
$$C_1=\left(\begin{array}{c} 
             \vec{c}_{1,1}\\
             \vec{c}_{1,2}\\
             \vdots 
            \end{array}\right), \ C_2=\left(\begin{array}{c} 
             \vec{c}_{2,1}\\
             \vec{c}_{2,2}\\
             \vdots 
            \end{array}\right) \ \Rightarrow \ 
           E_1=\left(\begin{array}{c} 
             \vec{c}_{1,1}\\
             \vec{c}_{2,1}\\
             \vec{c}_{1,2}\\
             \vec{c}_{2,2}\\
             \vdots 
            \end{array}\right).$$
This procedure is called \emph{interlacing} (in this case the so-called interlacing factor is $2$).

Recall that above we assumed that $c_{j,k,\ell}=0$ for all $k > K(\ell)$. Let $E_j = (e_{j,k,\ell})_{k, \ell \in \mathbb{N}}$. Then the construction yields that $e_{j,k,\ell} = 0$ for all $k > 2 K(\ell)$. 

From \cite[Theorem~4.11 and Theorem~4.12]{D07} we obtain the following result.
\begin{prp}
If $C_1,\ldots,C_{2d} \in\mathbb{F}_2^{\N \times \N}$ generate an order 1 digital $(t',2d)$-sequence over $\mathbb{F}_2$, then $E_1,\ldots,E_d \in\mathbb{F}_2^{\N \times \N}$ generate an order 2 digital $(t,d)$-sequence over $\mathbb{F}_2$ with $$t = 2 t' + d.$$
\end{prp}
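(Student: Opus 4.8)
The plan is to verify Definition~\ref{def_seq} directly for $\alpha = 2$ by reducing, at each level $n$, the order~$2$ linear independence condition for the interlaced matrices $E_1,\dots,E_d$ to the order~$1$ linear independence condition for $C_1,\dots,C_{2d}$, which holds by hypothesis. Fix an integer $n > t/2 = t' + d/2$; since $d \ge 1$ this gives $n > t'$, so by assumption the submatrices $C_{1,n\times n},\dots,C_{2d,n\times n}$ generate an order~$1$ digital $(t',n,2d)$-net. The key structural observation is that interlacing and truncation commute: for $2u+v \le 2n$ with $v \in \{1,2\}$ one has $u+1 \le n$, and the row $\vec e_{j,2u+v}$ of $E_{j,2n\times n}$ equals the row $\vec c_{2(j-1)+v,u+1}$ of $C_{2(j-1)+v,n\times n}$. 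Hence the odd-indexed rows of $E_{j,2n\times n}$ are exactly the $n$ rows of $C_{2j-1,n\times n}$ and the even-indexed rows are exactly the $n$ rows of $C_{2j,n\times n}$. In particular any family of rows selected from $E_1,\dots,E_d$ \emph{is}, as a family of vectors in $\F_2^n$, a family of rows of $C_1,\dots,C_{2d}$, so the two linear independence statements are literally the same statement.

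It then remains to check that the order~$2$ weight bound for an $E$-selection forces the order~$1$ weight bound for the corresponding $C$-selection. Consider rows $i_{j,\nu_j} < \dots < i_{j,1} \le 2n$ chosen from each $E_j$ and satisfying $\sum_{j=1}^d s_j \le 2n - t$, where $s_j = \sum_{l=1}^{\min(\nu_j,2)} i_{j,l}$. Under the correspondence above, an odd index $i = 2u+1$ contributes row $(i+1)/2$ of $C_{2j-1}$ and an even index $i = 2u+2$ contributes row $i/2$ of $C_{2j}$. Writing $a_j$ and $b_j$ for the largest $C_{2j-1}$- and $C_{2j}$-row indices thus selected (and $0$ if none), the order~$1$ weight of the $C$-selection is $\sum_{j=1}^d (a_j+b_j)$. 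Since the largest selected odd index and the largest selected even index are two distinct elements of $\{i_{j,\nu_j},\dots,i_{j,1}\}$, their sum is at most $i_{j,1}+i_{j,2}$, and a short case analysis over which parities occur yields the uniform bound $a_j + b_j \le (s_j+1)/2$. Summing over $j$ and using the hypothesis,
\[
\sum_{j=1}^d (a_j + b_j) \le \frac12 \sum_{j=1}^d s_j + \frac{d}{2} \le \frac{2n - t}{2} + \frac{d}{2} = n - t',
\]
which is exactly the order~$1$ weight threshold. The order~$1$ $(t',n,2d)$-net property then gives linear independence of the $C$-rows, hence of the $E$-rows.

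Letting $n$ range over all integers exceeding $t/2$ yields that $E_1,\dots,E_d$ generate an order~$2$ digital $(t,d)$-sequence with $t = 2t' + d$. The only genuinely delicate point is the weight comparison in the second paragraph: one must track the rounding in the index map $i \mapsto (i+1)/2$ on odd indices, verify $a_j + b_j \le (s_j+1)/2$ in each of the cases (no rows, only odd, only even, both parities present) — the bound is tight, e.g.\ for the selection $\{2u+2,\,2u+1\}$ — and confirm that the accumulated rounding contributes exactly $d/2$, which is precisely what the choice $t = 2t' + d$ absorbs. Everything else is a direct unwinding of Definitions~\ref{def_net} and~\ref{def_seq}.
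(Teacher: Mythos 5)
Your proof is correct. Note that the paper does not actually prove this proposition: it simply cites Theorems~4.11 and~4.12 of Dick's 2007 paper, where the general digit-interlacing construction (arbitrary interlacing factor $\alpha$, yielding $t=\alpha t'+d\binom{\alpha}{2}$) is established. What you have written is a self-contained verification of the special case $\alpha=2$, and it follows the same underlying mechanism as that reference: interlacing commutes with truncation, so the selected rows of $E_{1,2n\times n},\dots,E_{d,2n\times n}$ are literally rows of $C_{1,n\times n},\dots,C_{2d,n\times n}$, and the whole content is the weight comparison. Your bookkeeping there is right: with $o$ and $e$ the largest selected odd and even indices in coordinate $j$, one gets $a_j+b_j=\tfrac{o+1}{2}+\tfrac{e}{2}\le\tfrac{i_{j,1}+i_{j,2}+1}{2}$ when both parities occur (since $o,e$ are two distinct selected indices), and the single-parity and $\nu_j\le 1$ cases are no worse; summing the $+\tfrac12$ over the $d$ coordinates produces exactly the $+\tfrac{d}{2}$ that the choice $t=2t'+d$ absorbs, landing precisely on the order-$1$ threshold $n-t'$. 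One point worth making explicit (you implicitly use it) is that the index map is injective on the selection — distinct odd indices go to distinct rows of $C_{2j-1}$, distinct even indices to distinct rows of $C_{2j}$, and the target matrices differ across $j$ and across parity — so linear independence of the $C$-family really is the same assertion as linear independence of the $E$-family. With that remark added, the argument is complete and gives the reader something the paper itself delegates to a citation.
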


Explicit constructions of suitable generating matrices $C_1,\ldots, C_s$ over $\mathbb{F}_2$ were obtained by Sobol'~\cite{S67}, Niederreiter~\cite{N87,N92}, Niederreiter-Xing~\cite{NX96} and others (see \cite[Chapter~8]{DP10} for an overview). Any of these constructions is sufficient for our purpose, however, for completeness, we briefly describe a special case of Tezuka's construction~\cite{T93}, which is a generalization of Sobol's construction~\cite{S67} and Niederreiter's construction~\cite{N87} of the generating matrices. 

We explain how to construct the entries $c_{j,k,\ell} \in \mathbb{F}_2$ of the generating matrices $C_j = (c_{j,k,\ell})_{k,\ell \ge 1}$ for $j=1,2,\ldots,s$ (for our purpose $s=2d$). To this end choose the polynomials $p_1=x$ and $p_j \in \mathbb{F}_2[x]$ for $j =2,\ldots,s$ to be the $(j-1)$-th irreducible polynomial in a list of irreducible polynomials over $\mathbb{F}_2$ that is sorted in increasing order according to their degree $e_j = \deg(p_j)$, that is, $e_2 \le e_3 \le \ldots \le e_{s-1}$ (the ordering of polynomials with the same degree is irrelevant). We also put $e_1=\deg(x)=1$. 

Let $j \in \{1,\ldots,s\}$ and $k \in \mathbb{N}$. Take $i-1$ and $z$ to be respectively the main term and remainder when we divide $k-1$ by $e_j$, so that   $k-1  = (i-1) e_j + z$, with $0 \le z < e_j$. Now consider the Laurent series expansion
\begin{equation*}
\frac{x^{e_j-z-1}}{p_j(x)^i} = \sum_{\ell =1}^\infty a_\ell(i,j,z) x^{-\ell} \in \mathbb{F}_2((x^{-1})).
\end{equation*}
For $\ell \in \mathbb{N}$ we set
\begin{equation}\label{def_sob_mat}
c_{j,k,\ell} = a_\ell(i,j,z).
\end{equation}
Every digital sequence with generating matrices $C_j = (c_{j,k,\ell})_{k,\ell \ge 1}$ for $j=1,2,\ldots,s$ found in this way is a special instance of a Sobol' sequence which in turn is a special instance of so-called generalized Niederreiter sequences (see \cite[(3)]{T93}). Note that in the construction above we always have $c_{j,k,\ell}=0$ for all $k > \ell$. The $t$-value for these sequences is known to be $t = \sum_{j=1}^s (e_j-1)$, see \cite[Chapter~4.5]{N92} for the case of Niederreiter sequences.

\begin{rem}
Let $C_1,\ldots,C_{2d}$ be $\mathbb{N}\times \mathbb{N}$ matrices which are constructed according to Tezuka's method as described above. Let $E_1,\ldots,E_d$ be the generator matrices of the corresponding order 2 digital sequence. Then we always have $e_{j,k,\ell}=0$ for all $k > 2 \ell$, where $e_{j,k,\ell}$ is the entry in row $k$ and column $\ell$ of the matrix $E_j$.  
\end{rem}

Now we can state a more concrete version of our main result.

\begin{thm}\label{thm_main2}
For every order 2 digital $(t,d)$-sequence $\S_d$ over $\mathbb{F}_2$, with generating matrices $E_i=(e_{i,k,\ell})_{k,\ell \ge 1}$ for which $e_{i,k,\ell}=0$ for all $k > 2\ell$ and for all $i\in \{1,2,\ldots,d\}$,  we have  
\begin{align*}
 L_{p,N}(\S_d) \ll_{p,d} 2^t \frac{\left(\log N\right)^{\frac{d}{2}}}{N}\ \ \ \ \mbox{ for all $N\ge 2$ and all $1 \le p < \infty$.}
\end{align*}
\end{thm}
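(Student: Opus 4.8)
\emph{Overall strategy.} The plan is to transfer the problem from the $L_p$-norm of the discrepancy function to its coefficients with respect to the $d$-dimensional tensor-product Haar system. Write $h_{\bsj,\bsm}$ for the $L_\infty$-normalised Haar function of resolution level $\bsj$ and position $\bsm$, supported on the dyadic box $I_{\bsj,\bsm}\subseteq[0,1)^d$ of volume $2^{-|\bsj|}$. The analytic engine is the product (dyadic) Littlewood--Paley inequality, which for every $p\in(1,\infty)$ asserts that
\[
\|D_{\S_d}^N\mid L_p([0,1]^d)\|\ll_{p,d}\left\|\Big(\sum_{\bsj}\sum_{\bsm}2^{2|\bsj|}\,\big|\langle D_{\S_d}^N,h_{\bsj,\bsm}\rangle\big|^2\,\chi_{I_{\bsj,\bsm}}\Big)^{1/2}\,\Big|\,L_p([0,1]^d)\right\| .
\]
For $p=2$ both sides coincide with $\big(\sum_{\bsj,\bsm}2^{|\bsj|}|\langle D_{\S_d}^N,h_{\bsj,\bsm}\rangle|^2\big)^{1/2}$ by Parseval, which is the route taken in \cite{DP14a}. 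The theorem thus splits into two tasks: sharp bounds on the Haar coefficients $\langle D_{\S_d}^N,h_{\bsj,\bsm}\rangle$, and the $L_p$-estimate of the resulting square function. The endpoint $p=1$ needs no extra work: as $[0,1]^d$ carries a probability measure, $\|D_{\S_d}^N\|_{L_1}\le\|D_{\S_d}^N\|_{L_2}$, so it is subsumed by the case $p=2$.

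\emph{Haar coefficients of the discrepancy function.} Splitting the coefficient into a counting part and a volume part,
\[
\langle D_{\S_d}^N,h_{\bsj,\bsm}\rangle=\frac1N\sum_{k=0}^{N-1}\int_{[0,1]^d}\chi_{[\bszero,\bsx)}(\bsx_k)\,h_{\bsj,\bsm}(\bsx)\dint\bsx-\int_{[0,1]^d}x_1\cdots x_d\,h_{\bsj,\bsm}(\bsx)\dint\bsx ,
\]
I would first dispose of the volume part, which factorises over the coordinates into one-dimensional Haar coefficients of the identity and so decays geometrically in $|\bsj|$. The decisive input for the counting part is the order $2$ structure: through the geometric fairness of the net (Lemma~\ref{fairint}) an order $2$ digital $(t,d)$-net reproduces the uniform measure on the admissible dyadic boxes to second order, which forces the counting part to cancel the volume part up to a remainder that is negligible outside a bounded band of resolution levels and carries a factor $2^t$ from the quality parameter. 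Here the support hypothesis $e_{i,k,\ell}=0$ for $k>2\ell$ is used as well: it guarantees that the first $N\le 2^m$ terms are dyadic rationals of denominator at most $2^{2m}$, so only resolution levels up to $\approx 2\log_2 N$ contribute. Because the sequence is a dynamic object and $N$ need not be a power of $2$, I would first expand $N$ in base $2$ and partition the initial segment $\{\bsx_0,\dots,\bsx_{N-1}\}$ into $O(\log N)$ blocks, each a digitally shifted order $2$ net that inherits the fairness property; the coefficient bound is then assembled block by block.

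\emph{Summation and the exponent $d/2$.} With the coefficient bounds in hand I would substitute them into the square function and estimate its $L_p$-norm. The geometric decay in the resolution makes the sums over the coordinate directions convergent and absorbs the contribution of the fine scales; the coarse scales that survive are, in each coordinate, only $O(\log N)$ in number, being tied to the dyadic digits of $N$, so the total number of active levels is of order $(\log N)^d$. The crucial gain is that passing through the square function, rather than estimating each level by the triangle inequality, is what upgrades this naive count $(\log N)^d$ to the sharp $(\log N)^{d/2}$ — exactly the square-root saving that the $L_2$-proof of \cite{DP14a} realises through Parseval. Tracking the constants, with the $p$-dependence housed entirely in the Littlewood--Paley constant, then produces $\|D_{\S_d}^N\|_{L_p}\ll_{p,d}2^t(\log N)^{d/2}/N$.

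\emph{Main obstacle.} I expect the real difficulty to lie in the sharp Haar coefficient estimate, uniformly in $N$. For $N=2^m$ the initial segment is itself an order $2$ net and the fairness lemma applies directly, but for general $N$ one must control the discrepancy of a union of digitally shifted nets and still extract both the geometric decay in the resolution and the correct $2^t$ dependence. Reconciling this block decomposition with the summation so that precisely the exponent $d/2$ emerges, rather than $d-1$ or $d$, and keeping the estimates uniform across all $1<p<\infty$, is the delicate technical core of the argument.
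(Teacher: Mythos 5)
Your outline follows essentially the same route as the paper: reduce $p=1$ to $p>1$ by monotonicity of the $L_p$-norm, expand the discrepancy function in the tensor-product Haar system, apply a dyadic Littlewood--Paley inequality (the paper uses the coefficient form, Proposition~\ref{haarlpnorm} with $\bar p=\max(p,2)$, which is exactly what your square-function inequality yields after the triangle inequality in $L_{p/2}$), split each Haar coefficient into a counting and a volume part (Lemmas~\ref{lem_haar_coeff_vol} and \ref{lem_haar_coeff_number}), write $N$ in base $2$ and decompose the initial segment into at most $O(\log N)$ blocks, each a digitally shifted order $2$ digital net --- this block decomposition, not the finite resolution of the points, is where the hypothesis $e_{i,k,\ell}=0$ for $k>2\ell$ is actually used --- and finally sum, with the fine scales $|\bsj|+t/2\ge \log_2 N$ handled by the fairness Lemma~\ref{fairint}.

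The one genuine gap is the decisive quantitative input, which you describe only qualitatively and, as stated, incorrectly. For a digitally shifted order $2$ digital $(t,n,d)$-net the paper uses (Lemma~\ref{haarcoeffdignets}, imported from \cite{M15}) the bound $|\langle D_{\P_{2^n,d}},h_{\bsj,\bsm}\rangle|\ll 2^{-2n+t}\,(2n-t-2|\bsj|)^{d-1}$ for all $|\bsj|+t/2\le n$. Your claim that the remainder is ``negligible outside a bounded band of resolution levels'' does not capture this: the estimate is needed on \emph{all} coarse levels, it carries a polynomially growing factor $(2n-t-2|\bsj|)^{d-1}$, and the heart of the proof is to sum these contributions over the blocks and over all $\bsj$ in each band $n_\mu\le|\bsj|+t/2<n_{\mu+1}$ (via Lemmas~\ref{index_dim_red} and \ref{index_dim_red_log} and a geometric-series estimate) so that the squared norm comes out as $2^{2t}(\log N)^d/N^2$ rather than a higher power of $\log N$. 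Without this precise coefficient bound --- whose proof is the nontrivial second-order cancellation you allude to, and which even the paper outsources rather than reproves --- the argument does not close. Everything else in your plan matches the paper's proof.
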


We remark that this result is not only a generalization of the main result in \cite{DP14a} from $L_2$- to $L_p$-discrepancy for general finite $p$ but also a considerable improvement in the following sense. In \cite{DP14a} the explicit construction is based on higher order sequences of order $\alpha=5$. Here, on the other hand, we show that even $\alpha=2$ suffices in order to achieve the optimal discrepancy bound with respect to the lower bound \eqref{lbdlpdiseq}. This means that for the explicit construction of a sequence in dimension $d$ one can begin with a classical digital sequence in dimension $s=2d$ rather than $s=5d$.

Note that the explicit construction of the order $2$ digital $(t,d)$-sequences $\S_d$ shown above, with generating matrices $E_i=(e_{i,k,\ell})_{k,\ell \ge 1}$ for which $e_{i,k,\ell}=0$ for all $k > 2\ell$ and for all $i\in \{1,2,\ldots,d\}$, does not depend on the parameter $p$ in Theorem~\ref{thm_main2}. Our explicit construction (based on Tezuka's construction and the interlacing of the generating matrices) is also extensible in the dimension, i.e., if we have constructed the sequence $\S_d$, we can add one more coordinate to obtain the sequence $\S_{d+1}$. In other words, we can define a sequence $\S_{\infty}$ of points in $[0,1)^{\mathbb{N}}$ and obtain the sequence $\S_d$, $d \in \mathbb{N}$, by projecting $\S_{\infty}$ to the first $d$ coordinates.

\section{Haar bases}\label{sec_Haar}

The proof of Theorem~\ref{thm_main2} is based on Haar functions. This is in contrast to the proof of the result in \cite{DP14a} (the $L_2$-discrepancy of order 5 digital sequences is of optimal order) which is based on Walsh functions.\\

We define $\N_0=\N \cup \{0\}$ and $\N_{-1}=\N_0\cup\{-1\}$. Let $\D_j = \{0,1,\ldots, 2^j-1\}$ for $j \in \N_0$ and $\D_{-1} = \{0\}$. For $\bsj = (j_1,\dots,j_d)\in\N_{-1}^d$ let $\D_{\bsj} = \D_{j_1}\times\ldots\times \D_{j_d}$. For $\bsj\in\N_{-1}^d$ we write $|\bsj| = \max(j_1,0) + \cdots + \max(j_d,0)$.

For $j \in \N_0$ and $m \in \D_j$ we call the interval $I_{j,m} = \big[ 2^{-j} m, 2^{-j} (m+1) \big)$ the $m$-th dyadic interval in $[0,1)$ on level $j$. We put $I_{-1,0}=[0,1)$ and call it the $0$-th dyadic interval in $[0,1)$ on level $-1$. Let $I_{j,m}^+ = I_{j + 1,2m}$ and $I_{j,m}^- = I_{j + 1,2m+1}$ be the left and right half of $I_{j,m}$, respectively. For $\bsj \in \N_{-1}^d$ and $\bsm = (m_1, \ldots, m_d) \in \D_{\bsj}$ we call $I_{\bsj,\bsm} = I_{j_1,m_1} \times \ldots \times I_{j_d,m_d}$ the $\bsm$-th dyadic interval in $[0,1)^d$ on level $\bsj$. We call the number $|\bsj|$ the order of the dyadic interval $I_{\bsj,\bsm}$. Its volume is $2^{-|\bsj|}$.

Let $j \in \N_{0}$ and $m \in \D_j$. Let $h_{j,m}$ be the function on $[0,1)$ with support in $I_{j,m}$ and the constant values $1$ on $I_{j,m}^+$ and $-1$ on $I_{j,m}^-$. We put $h_{-1,0} = \chi_{I_{-1,0}}$ on $[0,1)$. The function $h_{j,m}$ is called the {\it $m$-th dyadic Haar function on level $j$}.

Let $\bsj \in \N_{-1}^d$ and $\bsm \in \D_{\bsj}$. The function $h_{\bsj,\bsm}$ given as the tensor product
\[ h_{\bsj,\bsm}(\bsx) = h_{j_1,m_1}(x_1) \cdots h_{j_d,m_d}(x_d) \]
for $\bsx = (x_1, \ldots, x_d) \in [0,1)^d$ is called a {\it dyadic Haar function} on $[0,1)^d$. The system of dyadic Haar functions $h_{\bsj,\bsm}$ for $\bsj \in \N_{-1}^d, \, \bsm \in \D_{\bsj}$ is called {\it dyadic Haar basis on $[0,1)^d$}.

It is well known that the system
\[ \left\{2^{\frac{|\bsj|}{2}}h_{\bsj,\bsm} \,:\,\bsj\in\N_{-1}^d,\,\bsm\in \D_{\bsj}\right\} \]
is an orthonormal basis of $L_2([0,1)^d)$, an unconditional basis of $L_p([0,1)^d)$ for $1 < p < \infty$ and a conditional basis of $L_1([0,1)^d)$. For any function $f\in L_2([0,1)^d)$ we have Parseval's identity
\[ \|f|L_2([0,1)^d)\|^2 = \sum_{\bsj \in \N_{-1}^d} 2^{|\bsj|} \sum_{\bsm\in \D_{\bsj}}|\langle f,h_{\bsj,\bsm}\rangle|^2, \] where $\langle \cdot , \cdot \rangle$ denotes the usual $L_2$-inner product, i.e., $\langle f ,g \rangle =\int_{[0,1]^d} f(\bsx) g(\bsx) \dint \bsx$. The terms $\langle f,h_{\bsj,\bsm}\rangle$ are called the {\it Haar coefficients} of the function $f$.

The following Littlewood-Paley type estimate for the $L_p$-norm for $p \in (1,\infty)$ is a special case of \cite[Theorem~2.11, Corollary~1.13]{M13b}.
\begin{prp} \label{haarlpnorm}
Let $p \in (1,\infty)$ and $f\in L_p([0,1)^d)$. Then
\[ \|f|L_p([0,1)^d)\|^2 \ll_{p,d} \sum_{\bsj\in\N_{-1}^d} 2^{2|\bsj|(1-1/\bar{p})}\left(\sum_{\bsm\in\D_{\bsj}} |\langle f,h_{\bsj,\bsm}\rangle|^{\bar{p}}\right)^{2/\bar{p}} \]
where $\bar{p} = \max(p,2)$.
\end{prp}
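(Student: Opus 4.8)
The plan is to reduce the inequality to the classical dyadic square-function estimate and then to bound the square function by the stated right-hand side, treating separately the ranges $p\ge 2$ (where $\bar p=p$) and $1<p<2$ (where $\bar p=2$). Throughout I abbreviate $\|\cdot\|_p=\|\,\cdot\,|L_p([0,1)^d)\|$. First I would introduce the $d$-parameter dyadic square function
\[ Sf(\bsx)=\Big(\sum_{\bsj\in\N_{-1}^d}2^{2|\bsj|}\sum_{\bsm\in\D_{\bsj}}|\langle f,h_{\bsj,\bsm}\rangle|^2\,\chi_{I_{\bsj,\bsm}}(\bsx)\Big)^{1/2}. \]
Since $|2^{|\bsj|/2}h_{\bsj,\bsm}|^2=2^{|\bsj|}\chi_{I_{\bsj,\bsm}}$, this is precisely the square function formed from the orthonormal system $\{2^{|\bsj|/2}h_{\bsj,\bsm}\}$ and the corresponding coefficients of $f$. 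Because the excerpt already records that this system is an unconditional basis of $L_p([0,1)^d)$ for $1<p<\infty$, I would derive the one-sided Littlewood--Paley bound
\[ \|f\|_p\ll_{p,d}\|Sf\|_p \]
by averaging $\|\sum_{\bsj,\bsm}\varepsilon_{\bsj,\bsm}\langle f,2^{|\bsj|/2}h_{\bsj,\bsm}\rangle\,2^{|\bsj|/2}h_{\bsj,\bsm}\|_p$ over random signs $\varepsilon_{\bsj,\bsm}$ and applying Khintchine's inequality pointwise via Fubini. This is the only genuinely analytic ingredient.

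\emph{Case $p\ge2$.} Here I would write $\|f\|_p^2\ll_{p,d}\|Sf\|_p^2=\|(Sf)^2|L_{p/2}\|$ and use the triangle inequality in $L_{p/2}$, which is legitimate since $p/2\ge1$, to pull the sum over the levels $\bsj$ outside the norm. For each fixed $\bsj$ the cells $\{I_{\bsj,\bsm}\}_{\bsm\in\D_{\bsj}}$ partition $[0,1)^d$ into sets of volume $2^{-|\bsj|}$, so disjointness of supports gives
\[ \Big\|\sum_{\bsm\in\D_{\bsj}}|\langle f,h_{\bsj,\bsm}\rangle|^2\chi_{I_{\bsj,\bsm}}\,\Big|L_{p/2}\Big\|=2^{-2|\bsj|/p}\Big(\sum_{\bsm\in\D_{\bsj}}|\langle f,h_{\bsj,\bsm}\rangle|^p\Big)^{2/p}. \]
Multiplying by the prefactor $2^{2|\bsj|}$ produces $2^{2|\bsj|(1-1/p)}=2^{2|\bsj|(1-1/\bar p)}$, and summing over $\bsj$ reproduces exactly the asserted bound.

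\emph{Case $1<p<2$.} Here no square function is needed: on the probability space $[0,1)^d$ one has $\|f\|_p\le\|f\|_2$, and Parseval's identity gives $\|f\|_2^2=\sum_{\bsj\in\N_{-1}^d}2^{|\bsj|}\sum_{\bsm\in\D_{\bsj}}|\langle f,h_{\bsj,\bsm}\rangle|^2$. Since $\bar p=2$ turns the exponent $2|\bsj|(1-1/\bar p)$ into $|\bsj|$ and the inner factor $(\sum_{\bsm}|\cdot|^{\bar p})^{2/\bar p}$ into $\sum_{\bsm}|\cdot|^2$, this is exactly the right-hand side. If one prefers a uniform treatment, the same conclusion also follows from the square-function route via $\|Sf\|_p\le\|Sf\|_2$ and $\|Sf\|_2^2=\|f\|_2^2$.

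The hard part is the Littlewood--Paley inequality $\|f\|_p\ll_{p,d}\|Sf\|_p$ for the $d$-parameter dyadic Haar system; once it is in hand, the remainder is only the triangle inequality, the disjointness of a single dyadic level, and Parseval. I expect the sole points requiring care to be the bookkeeping of the normalization factor $2^{|\bsj|/2}$ and the treatment of the degenerate components $j_i=-1$, where $\D_{-1}=\{0\}$ and $I_{-1,0}=[0,1)$; these must be tracked consistently through the definition of $|\bsj|$ and the cell volume $2^{-|\bsj|}$.
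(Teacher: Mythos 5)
Your proof is correct, but it takes a genuinely different route from the paper, which does not argue directly at all: it obtains Proposition~\ref{haarlpnorm} as a special case of \cite{M13b} (Theorem~2.11 combined with Corollary~1.13 there), i.e., from the identification $L_p([0,1)^d)=S^0_{p,2}F([0,1)^d)$, the embedding $S^0_{\max(p,2),2}B([0,1)^d)\hookrightarrow S^0_{p,2}F([0,1)^d)$, and the Haar-coefficient characterization of the Besov spaces of dominating mixed smoothness, which with smoothness $s=0$, fine index $q=2$ and integrability $\bar p$ produces exactly the right-hand side of the proposition. You replace this function-space machinery by the classical self-contained route: unconditionality of the multi-parameter Haar basis plus Khintchine and Fubini give the one-sided Littlewood--Paley bound $\|f\|_p\ll_{p,d}\|Sf\|_p$; for $p\ge 2$ the triangle inequality in $L_{p/2}$ and the disjointness of the level-$\bsj$ cells of volume $2^{-|\bsj|}$ yield precisely the factor $2^{2|\bsj|(1-1/p)}\bigl(\sum_{\bsm}|\langle f,h_{\bsj,\bsm}\rangle|^{p}\bigr)^{2/p}$; and for $1<p<2$ the claim collapses to $\|f\|_p\le\|f\|_2$ plus Parseval, since with $\bar p=2$ the right-hand side is exactly $\sum_{\bsj}2^{|\bsj|}\sum_{\bsm}|\langle f,h_{\bsj,\bsm}\rangle|^2$ (the only point worth a sentence is that if this sum is finite then the dyadic martingale $\mathbb{E}_nf$ is $L_2$-bounded, so $f\in L_2$ and Parseval applies, while otherwise there is nothing to prove). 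All steps check, including your handling of the degenerate components $j_i=-1$, where $\D_{-1}=\{0\}$, $I_{-1,0}=[0,1)$ and $|\bsj|=\sum_i\max(j_i,0)$ still makes each fixed-$\bsj$ family a partition into cells of measure $2^{-|\bsj|}$. What the paper's citation buys is generality --- the full scale of $S^s_{p,q}B/F$ characterizations, which the authors exploit in related work --- whereas your argument buys a short, elementary proof of exactly the inequality needed, whose only nontrivial analytic input is Khintchine's inequality together with unconditionality of the Haar system, a fact the paper itself records in Section~\ref{sec_Haar}.
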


We present a connection between higher order digital nets over $\mathbb{F}_2$ and dyadic intervals.

\begin{lem}\label{fairint}
Let $\P_{2^n,d}$ be an order $\alpha$ digital $(t,n,d)$-net over $\mathbb{F}_2$. Then every dyadic interval of order $n-\lceil t/\alpha\rceil$ contains at most $2^{\lceil t/\alpha\rceil}$ points of $\P_{2^n,d}$.   
\end{lem}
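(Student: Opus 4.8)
The plan is to translate the combinatorial condition in Definition~\ref{def_net} into a statement about how points distribute among dyadic intervals of a fixed order. Fix a dyadic interval $I_{\bsj,\bsm}$ of order $|\bsj| = n - \lceil t/\alpha\rceil$ in $[0,1)^d$, with $\bsj = (j_1,\dots,j_d)$ and $j_1 + \cdots + j_d = n - \lceil t/\alpha\rceil$. The key observation is that the condition ``$\bsx_k \in I_{\bsj,\bsm}$'' is equivalent to prescribing, for each coordinate $j \in \{1,\dots,d\}$, the first $j_i$ dyadic digits $x_{i,k,1},\dots,x_{i,k,j_i}$ of the point. By \eqref{matrix_vec_net}, these digits are exactly the coordinates of $C_i \vec{k}$ corresponding to the first $j_i$ rows $\vec{c}_{i,1},\dots,\vec{c}_{i,j_i}$ of $C_i$. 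So membership in $I_{\bsj,\bsm}$ amounts to a system of linear equations $A\vec{k} = \vec{b}$ over $\mathbb{F}_2$, where $A$ is the matrix whose rows are $\vec{c}_{i,l}$ for $1 \le l \le j_i$, $1 \le i \le d$, stacked together, and $\vec{b}$ encodes the digit-pattern defining $I_{\bsj,\bsm}$.

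First I would count the rows: the total number of rows of $A$ is $j_1 + \cdots + j_d = n - \lceil t/\alpha\rceil$. The number of points of $\P_{2^n,d}$ lying in $I_{\bsj,\bsm}$ is the number of solutions $\vec{k} \in \mathbb{F}_2^n$ of $A\vec{k} = \vec{b}$, which is either $0$ or $2^{n - \rank(A)}$ depending on consistency. Hence the crucial quantity is $\rank(A)$: if I can show $\rank(A) \ge n - \lceil t/\alpha\rceil$, then each interval contains at most $2^{n-(n-\lceil t/\alpha\rceil)} = 2^{\lceil t/\alpha\rceil}$ points, as claimed. Since $A$ has exactly $n - \lceil t/\alpha\rceil$ rows, proving full row rank is equivalent to proving that these rows are linearly independent over $\mathbb{F}_2$.

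The heart of the argument, and the main obstacle, is therefore verifying that the net property of Definition~\ref{def_net} guarantees linear independence of precisely this selection of rows. The rows I have chosen are $\vec{c}_{i,1},\dots,\vec{c}_{i,j_i}$ for each $i$; that is, in the notation of the definition I am taking, for each $i$, the $\nu_i = j_i$ smallest row indices $1,\dots,j_i$. I would check the weight condition of the definition: with $\min(\nu_i,\alpha)$ in place, the relevant sum is $\sum_{i=1}^d \sum_{l=1}^{\min(j_i,\alpha)} l \le \sum_{i=1}^d (1 + 2 + \cdots + \alpha)$ contributions, but more usefully I would bound $\sum_{i=1}^d \sum_{l=1}^{\min(j_i,\alpha)} i_{i,l}$ where the $i_{i,l}$ run over the $\min(j_i,\alpha)$ largest indices among $1,\dots,j_i$, i.e.\ the indices $j_i, j_i-1,\dots$. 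The delicate point is that the definition imposes independence of row \emph{vectors} indexed by the $\min(\nu_i,\alpha)$ largest indices within each coordinate block, whereas I need independence of \emph{all} $j_i$ chosen rows; I would reconcile this by noting that independence of the full selection $\{\vec{c}_{i,l}: 1 \le l \le j_i\}$ follows once the weight constraint $\sum_i\sum_{l=1}^{\min(j_i,\alpha)} i_{i,l} \le \alpha n - t$ holds for the worst-case choice of large indices, which here is $i_{i,l} = j_i - l + 1$.

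Finally I would carry out the weight estimate explicitly. Using $\sum_{l=1}^{\min(j_i,\alpha)}(j_i - l + 1) \le \alpha\, j_i$ (each of at most $\alpha$ terms is at most $j_i$), summing over $i$ gives the bound $\alpha \sum_{i=1}^d j_i = \alpha(n - \lceil t/\alpha\rceil) \le \alpha n - t$, using $\alpha\lceil t/\alpha\rceil \ge t$. Thus the weight condition of Definition~\ref{def_net} is satisfied, the selected rows are linearly independent, $\rank(A) = n - \lceil t/\alpha\rceil$, and the solution count bound $2^{\lceil t/\alpha\rceil}$ follows. The only subtlety to handle carefully is the interaction between the $\min(\nu_i,\alpha)$ truncation and the claim of full independence of all $j_i$ rows per block, which is exactly where the order-$\alpha$ structure does its work and where I expect to spend most of the writing.
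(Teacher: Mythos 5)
Your argument is correct, but it takes a different route from the paper. The paper's proof is a two-line reduction: it invokes the propagation rule \eqref{eq_t_tprime} (an order $\alpha$ digital $(t,n,d)$-net is an order $1$ digital $(\lceil t/\alpha\rceil,n,d)$-net) and then cites the classical equidistribution property of order $1$ nets, namely that every dyadic interval of order $n-\lceil t/\alpha\rceil$ contains \emph{exactly} $2^{\lceil t/\alpha\rceil}$ points. You instead work directly from Definition~\ref{def_net}: membership of $\bsx_k$ in $I_{\bsj,\bsm}$ with $|\bsj|=n-\lceil t/\alpha\rceil$ is a linear system whose coefficient matrix has rows $\vec{c}_{i,1},\ldots,\vec{c}_{i,j_i}$, and the weight estimate $\sum_{i=1}^d\sum_{l=1}^{\min(j_i,\alpha)}(j_i-l+1)\le \alpha\sum_i j_i=\alpha(n-\lceil t/\alpha\rceil)\le \alpha n-t$ puts this selection within the scope of the definition, giving full row rank and hence at most (in fact exactly) $2^{\lceil t/\alpha\rceil}$ solutions. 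This is self-contained and, in effect, reproves the $\alpha\to 1$ propagation rule for initial-segment row selections, whereas the paper's version is shorter but rests on two cited facts. One remark on your ``delicate point'': Definition~\ref{def_net} already asserts linear independence of \emph{all} the selected vectors $\vec{c}_{j,i_{j,\nu_j}},\ldots,\vec{c}_{j,i_{j,1}}$, with only the \emph{weight condition} restricted to the $\min(\nu_j,\alpha)$ largest indices; so there is no gap to reconcile there, and your resolution is just a restatement of what the definition provides. The first bound you float, $\sum_i\sum_{l=1}^{\min(j_i,\alpha)}l$, sums the wrong quantity and should simply be deleted; the computation you actually carry out with $i_{i,l}=j_i-l+1$ is the right one.
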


\begin{proof}
As mentioned in Section~\ref{sec_honetssequ}, every order $\alpha$ digital $(t,n,d)$-net over $\mathbb{F}_2$ is an order~1 digital $(\lceil t/\alpha \rceil,n,d)$-net over $\mathbb{F}_2$. Then every dyadic interval of order $n-\lceil t/\alpha\rceil$ contains exactly $2^{\lceil t/\alpha\rceil}$ points of $\P_{2^n,d}$ (see \cite{DP10,N92}). 
\end{proof}

The following lemma is a slight generalization of \cite[Lemma~5.9]{M15}. The result was originally proved for order $2$ digital $(t,n,d)$-nets. The extension to digitally shifted order $2$ digital $(t,n,d)$-nets follows with almost exactly the same arguments as the proof of \cite[Lemma~5.9]{M15} (not repeated here). We restrict ourselves to the finite field $\mathbb{F}_2$.

\begin{lem} \label{haarcoeffdignets}
 Let $\P_{2^n,d}$ be a digitally shifted order $2$ digital $(t,n,d)$-net over $\mathbb{F}_2$. Let $\bsj\in\N_{-1}^d$ with $|\bsj|+t/2\leq n$ and $\bsm\in\D_{\bsj}$. Then
\[ |\langle D_{\P_{2^n,d}},h_{\bsj,\bsm}\rangle| \ll 2^{-2n+t}(2n-t-2|\bsj|)^{d-1}. \]
\end{lem}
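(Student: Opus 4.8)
The plan is to first turn the Haar coefficient into an integration error for an explicit product function, and then exploit the order $2$ structure. Writing $N=2^n$ and splitting the discrepancy function \eqref{deflocdisc} into its counting part and its volume part, I would compute each inner product coordinatewise. Since $\chi_{[\bszero,\bsx)}(\bsz)=\prod_{i=1}^d\chi_{(z_i,1]}(x_i)$, Fubini gives $\int_{[0,1)^d}\chi_{[\bszero,\bsx)}(\bsz)\,h_{\bsj,\bsm}(\bsx)\dint\bsx=\prod_{i=1}^d\psi_{j_i,m_i}(z_i)$, where $\psi_{j,m}(z):=\int_z^1 h_{j,m}(x)\dint x$; and since $\int_0^1\psi_{j,m}(z)\dint z=\int_0^1 x\,h_{j,m}(x)\dint x$, the volume part equals $\int_{[0,1)^d}\prod_{i=1}^d\psi_{j_i,m_i}(z_i)\dint\bsz$. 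Hence, with $\Psi_{\bsj,\bsm}(\bsz):=\prod_{i=1}^d\psi_{j_i,m_i}(z_i)$,
\[ \langle D_{\P_{2^n,d}},h_{\bsj,\bsm}\rangle=\frac1N\sum_{\bsz\in\P_{2^n,d}}\Psi_{\bsj,\bsm}(\bsz)-\int_{[0,1)^d}\Psi_{\bsj,\bsm}(\bsz)\dint\bsz, \]
i.e.\ the Haar coefficient is exactly the integration error of $\Psi_{\bsj,\bsm}$. I would record the elementary properties of the one-dimensional factors: for $j\ge 0$, $\psi_{j,m}$ is the piecewise-linear ``tent'' supported on $I_{j,m}$ with slopes $\pm1$ and $\|\psi_{j,m}\|_\infty=2^{-j-1}$, while $\psi_{-1,0}(z)=1-z$.

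Next I would isolate the contribution of the coarse scales and show it vanishes. Expanding each factor $\psi_{j_i,m_i}$ into the Haar basis at finer levels turns the error into a sum of terms $\frac1N\sum_{\bsz}h_{\bsj',\bsm'}(\bsz)$. Because the (digitally shifted) net is order $2$, it is in particular an order $1$ net of quality $\lceil t/2\rceil$, so by Lemma~\ref{fairint} every dyadic box of order at most $n-\lceil t/2\rceil$ carries exactly its fair share of points; a digital shift only translates the digit pattern and hence preserves all these box counts. Pairing the $\pm1$ half-boxes on which $h_{\bsj',\bsm'}$ is constant, this forces $\frac1N\sum_{\bsz}h_{\bsj',\bsm'}(\bsz)=0$ whenever the refined box order does not exceed $n-\lceil t/2\rceil$. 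Thus only fine scales survive.

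The decisive point, and the reason $\alpha=2$ suffices, is the following: each tent $\psi_{j,m}$ is an antiderivative of a Haar function, so its distributional second derivative is a finite (three-atom) measure, i.e.\ $\Psi_{\bsj,\bsm}$ has dominating mixed smoothness two. The linear-independence conditions defining an order $2$ net (Definition~\ref{def_net} with $\alpha=2$, which charge the two leading indices in each coordinate) are precisely calibrated to integrate such functions with error of order $2^{-2n+t}$, rather than the $2^{-n}$ that the order $1$ equidistribution of the previous step gives on its own. I would then carry out the summation over the surviving fine scales: this is a $d$-fold sum over refinement levels constrained by the single global fair-share threshold near order $2n-t$, which produces the prefactor $2^{-2n+t}$ together with a polynomial of degree $d-1$ in the number of free levels $2n-t-2|\bsj|$, matching the claimed $2^{-2n+t}(2n-t-2|\bsj|)^{d-1}$.

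The hard part will be realizing the second-order cancellation quantitatively. A crude triangle-inequality bound on the Haar expansion of $\Psi_{\bsj,\bsm}$ only recovers the order $1$ rate $2^{-n}$; to gain the extra factor $2^{-n}$ one must keep track of the signs and of the digital structure of the point set and invoke the $\alpha=2$ linear-independence relations of Definition~\ref{def_net}, together with the arithmetic bookkeeping that yields the $(2n-t-2|\bsj|)^{d-1}$ factor. This is exactly the content of \cite[Lemma~5.9]{M15}, whose argument I would follow; the only addition needed here is to observe, as above, that a digital shift leaves every dyadic box count and every linear-independence condition intact, so the computation carries over verbatim to digitally shifted order $2$ nets.
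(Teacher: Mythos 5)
Your proposal is correct and matches the paper's own treatment: the paper gives no self-contained proof of this lemma either, but simply invokes \cite[Lemma~5.9]{M15} and remarks that the extension to digitally shifted order $2$ nets follows by essentially the same arguments, which is precisely where your argument also lands. Your preliminary reduction of the Haar coefficient to the integration error of the tent-product $\Psi_{\bsj,\bsm}$, and your observation that a digital shift preserves all dyadic box counts (so the quantitative second-order cancellation from the cited lemma carries over), are sound and consistent with the paper's one-line justification.
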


\section{The proof of Theorem~\ref{thm_main2}}\label{sec_proof_main}

For the proof of the main result we need some auxiliary lemmas. The first two lemmas are elementary:

\begin{lem}\label{index_dim_red}
Let $r\in\N_0$ and $s\in\N$. Then
\[ \#\{(a_1,\ldots,a_s)\in\N_0^s:\; a_1 + \cdots + a_s = r\} \leq (r + 1)^{s-1}. \]
\end{lem}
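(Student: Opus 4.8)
The plan is to count the number of compositions of a nonnegative integer $r$ into $s$ ordered nonnegative parts and bound this by $(r+1)^{s-1}$. The quantity $\#\{(a_1,\ldots,a_s)\in\N_0^s:\; a_1 + \cdots + a_s = r\}$ is the classical ``stars and bars'' count, equal exactly to $\binom{r+s-1}{s-1}$, so the statement reduces to the elementary inequality $\binom{r+s-1}{s-1}\le (r+1)^{s-1}$. I expect the cleanest route, however, to avoid invoking the closed form and instead proceed by a direct induction on $s$, since this keeps the argument self-contained and makes the source of the bound transparent.

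First I would handle the base case $s=1$: the only solution of $a_1=r$ is $a_1=r$ itself, so the count equals $1$, which matches $(r+1)^{0}=1$. For the inductive step, assume the bound holds for $s$ parts and consider $s+1$ parts. I would split according to the value of the last coordinate $a_{s+1}$. For each fixed choice $a_{s+1}=b$ with $0\le b\le r$, the remaining coordinates $(a_1,\ldots,a_s)$ must satisfy $a_1+\cdots+a_s = r-b$, and by the induction hypothesis there are at most $(r-b+1)^{s-1}\le (r+1)^{s-1}$ such tuples. Summing over the $r+1$ admissible values $b\in\{0,1,\ldots,r\}$ gives at most $(r+1)\cdot (r+1)^{s-1} = (r+1)^{s}$, completing the step.

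There is essentially no hard part here; the only mild subtlety is ensuring the uniform bound $(r-b+1)^{s-1}\le (r+1)^{s-1}$ is applied correctly so that the $r+1$ summands can be bounded by the same quantity, which yields the clean product $(r+1)^s$ rather than a sharper but messier estimate. The inequality is admittedly loose (the true count is the binomial coefficient), but the looseness is exactly what makes $(r+1)^{s-1}$ a convenient and easily propagated bound for later use.
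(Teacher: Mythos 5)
Your proof is correct. The base case and the inductive step (conditioning on the value of the last coordinate, applying the induction hypothesis to the remaining $s$ coordinates summing to $r-b$, and using the uniform bound $(r-b+1)^{s-1}\le (r+1)^{s-1}$ over the $r+1$ choices of $b$) are all sound and give exactly $(r+1)^{s}$ as required. The paper itself does not spell out a proof but merely refers to the proof of Lemma~16.26 in the book of Dick and Pillichshammer, so your self-contained induction is a perfectly acceptable substitute; it is the standard elementary argument one would expect, and the observation that the exact count is $\binom{r+s-1}{s-1}$ is a correct (if unused) aside.
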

For a proof of this result we refer to \cite[Proof of Lemma~16.26]{DP10}.

\begin{lem}\label{index_dim_red_log}
 Let $K\in\N$, $A>1$ and $q,s \ge 0$. Then we have
\[ \sum_{r = 0}^{K-1} A^r (K-r)^q r^s \ll A^K\,K^s, \]
 where the implicit constant is independent of $K$.
\end{lem}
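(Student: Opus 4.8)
The plan is to prove the estimate
\[
\sum_{r=0}^{K-1} A^r (K-r)^q r^s \ll A^K K^s
\]
by reversing the summation index and isolating the geometrically largest terms. First I would substitute $\ell = K-r$, so the sum becomes
\[
\sum_{\ell=1}^{K} A^{K-\ell} \ell^q (K-\ell)^s = A^K \sum_{\ell=1}^{K} A^{-\ell} \ell^q (K-\ell)^s.
\]
Since $A>1$, the factor $A^{-\ell}$ decays geometrically in $\ell$, and the point is that the dominant contribution comes from small $\ell$, where $(K-\ell)^s \le K^s$ trivially. Thus it suffices to show
\[
\sum_{\ell=1}^{K} A^{-\ell} \ell^q (K-\ell)^s \ll K^s.
\]

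The key step is to bound $(K-\ell)^s \le K^s$ for every $\ell \in \{1,\ldots,K\}$ (using $s\ge 0$), which pulls the factor $K^s$ out of the sum and leaves
\[
\sum_{\ell=1}^{K} A^{-\ell} \ell^q (K-\ell)^s \le K^s \sum_{\ell=1}^{K} A^{-\ell} \ell^q \le K^s \sum_{\ell=1}^{\infty} \frac{\ell^q}{A^{\ell}}.
\]
The remaining infinite series $\sum_{\ell\ge 1} \ell^q A^{-\ell}$ converges because $A>1$ forces geometric decay to dominate the polynomial growth $\ell^q$ (for instance by the ratio test, the ratio of consecutive terms tends to $1/A < 1$). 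Its value is a finite constant $C(A,q)$ depending only on $A$ and $q$ but independent of $K$, which is exactly what the statement requires.

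I do not expect any genuine obstacle here, since the estimate is purely elementary; the only thing to be careful about is the bookkeeping of which parameters the implicit constant is allowed to depend on. The constant $C(A,q)$ depends on $A$ and $q$ but crucially not on $K$ (nor on $s$), so multiplying back gives $\sum_{r=0}^{K-1} A^r (K-r)^q r^s \le C(A,q)\, A^K K^s$, which is the claimed bound with implicit constant independent of $K$. I would remark that the heuristic behind the clean answer is that reversing the index turns the product of a rising power and a falling power into a geometric sum weighted by $A^{-\ell}$, and geometric decay localizes the mass of the sum near $\ell=0$ (equivalently $r=K$), where the $r^s$ factor attains its maximal order $K^s$ and the $(K-r)^q$ factor is negligible.
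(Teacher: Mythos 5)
Your argument is correct: reversing the index to $\ell=K-r$, bounding $(K-\ell)^s\le K^s$, and summing the convergent series $\sum_{\ell\ge 1}\ell^q A^{-\ell}$ gives the claim with a constant depending only on $A$ and $q$. The paper does not give its own proof but refers to \cite[Lemma~5.2]{M15}, where essentially this same elementary index-reversal and geometric-decay argument is used, so there is nothing substantive to compare.
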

For a proof we refer to \cite[Lemma~5.2]{M15}.

The subsequent two lemmas are required in order to estimate the Haar coefficients of the discrepancy function. The first one is a special case of \cite[Lemma~5.1]{M13b}.

\begin{lem}\label{lem_haar_coeff_vol}
Let $f(\bsx) = x_1\cdots x_d$ for $\bsx=(x_1,\ldots,x_d)\in [0,1)^d$. Let $\bsj\in\N_{-1}^d$ and $\bsm\in\D_{\bsj}$. Then $|\langle f,h_{\bsj,\bsm}\rangle|\asymp 2^{-2|\bsj|}$.
\end{lem}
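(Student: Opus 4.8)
The plan is to exploit the tensor product structure of both $f$ and the Haar functions. Since $f(\bsx)=x_1\cdots x_d=\prod_{i=1}^d g(x_i)$ with $g(x)=x$, and $h_{\bsj,\bsm}(\bsx)=\prod_{i=1}^d h_{j_i,m_i}(x_i)$, Fubini's theorem gives the factorization
\[ \langle f,h_{\bsj,\bsm}\rangle = \prod_{i=1}^d \int_0^1 g(x)\,h_{j_i,m_i}(x)\dint x. \]
Thus everything reduces to computing the one-dimensional Haar coefficients of the identity function $g(x)=x$ on $[0,1)$.

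Second, I would compute these one-dimensional coefficients explicitly. For the level $-1$ coefficient, $h_{-1,0}=\chi_{[0,1)}$ yields $\int_0^1 x\dint x = 1/2$. For $j\ge 0$ and $m\in\D_j$, writing $a=2^{-j}m$ and $h=2^{-j}$, the function $h_{j,m}$ equals $+1$ on $[a,a+h/2)$ and $-1$ on $[a+h/2,a+h)$, so an elementary computation gives
\[ \int_0^1 x\,h_{j,m}(x)\dint x = \int_a^{a+h/2}x\dint x - \int_{a+h/2}^{a+h} x\dint x = -\frac{h^2}{4} = -2^{-2j-2}. \]
The key observation here is that this value is independent of $m$; in particular $|\langle g,h_{j,m}\rangle| = 2^{-2j-2}$ for every $j\ge 0$ and every $m\in\D_j$.

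Third, I would assemble the product. Let $k=\#\{i:\,j_i\ge 0\}$, so that $d-k$ coordinates carry level $-1$. Using $\sum_{i=1}^d\max(j_i,0)=|\bsj|$, multiplying the one-dimensional coefficients yields
\[ |\langle f,h_{\bsj,\bsm}\rangle| = \Big(\tfrac12\Big)^{d-k}\prod_{i:\,j_i\ge 0}2^{-2j_i-2} = 2^{-2|\bsj|}\,2^{-(d+k)}. \]
Since $0\le k\le d$, the factor $2^{-(d+k)}$ lies between $2^{-2d}$ and $2^{-d}$, both constants depending only on $d$, whence $|\langle f,h_{\bsj,\bsm}\rangle|\asymp 2^{-2|\bsj|}$. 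There is no serious obstacle in this argument; the only point requiring care is the bookkeeping of the level $-1$ coordinates, which contribute neither to $|\bsj|$ nor (beyond a $d$-dependent constant) to the final magnitude, so that the equivalence is indeed to $2^{-2|\bsj|}$ with implied constants depending only on $d$.
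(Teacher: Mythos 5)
Your proof is correct: the tensor-product factorization, the one-dimensional computation $\int_0^1 x\,h_{j,m}(x)\dint x=-2^{-2j-2}$ for $j\ge 0$ (independent of $m$) together with the value $1/2$ at level $-1$, and the resulting exact identity $|\langle f,h_{\bsj,\bsm}\rangle|=2^{-2|\bsj|}2^{-(d+k)}$ with $k=\#\{i:j_i\ge 0\}$ all check out, and the factor $2^{-(d+k)}\in[2^{-2d},2^{-d}]$ indeed gives the claimed two-sided bound with constants depending only on $d$. The paper does not prove this lemma itself but cites it as a special case of Lemma~5.1 in \cite{M13b}; your direct computation is the natural self-contained argument and there is nothing to object to.
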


The next lemma is a special case of \cite[Lemma~5.2]{M13b}.
\begin{lem} \label{lem_haar_coeff_number}
Let $\bsz = (z_1,\ldots,z_d) \in [0,1)^d$ and $g(\bsx) = \chi_{[\bszero,\bsx)}(\bsz)$ for $\bsx = (x_1, \ldots, x_d) \in [0,1)^d$. Let $\bsj\in\N_{-1}^d$ and $\bsm\in\D_{\bsj}$. Then $\langle g,h_{\bsj,\bsm}\rangle = 0$ if $\bsz$ is not contained in the interior of the dyadic interval $I_{\bsj,\bsm}$. If $\bsz$ is contained in the interior of $I_{\bsj,\bsm}$ then $|\langle g,h_{\bsj,\bsm}\rangle|\ll 2^{-|\bsj|}$.
\end{lem}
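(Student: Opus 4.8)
The plan is to exploit the product structure of both the function $g$ and the Haar function $h_{\bsj,\bsm}$, reducing everything to a one-dimensional computation. First I would observe that, as a function of $\bsx$ with $\bsz$ fixed, $g(\bsx)=\chi_{[\bszero,\bsx)}(\bsz)$ equals $1$ precisely when $z_i<x_i$ for every $i$, so that
\[ g(\bsx)=\prod_{i=1}^d \chi_{(z_i,1)}(x_i). \]
Since $h_{\bsj,\bsm}(\bsx)=h_{j_1,m_1}(x_1)\cdots h_{j_d,m_d}(x_d)$, Fubini's theorem gives the factorisation
\[ \langle g,h_{\bsj,\bsm}\rangle = \prod_{i=1}^d \int_0^1 \chi_{(z_i,1)}(x)\,h_{j_i,m_i}(x)\dint x = \prod_{i=1}^d \int_{z_i}^1 h_{j_i,m_i}(x)\dint x. \]
Thus the whole problem is settled by estimating the single-coordinate integral $\int_z^1 h_{j,m}(x)\dint x$.

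Next I would carry out the one-dimensional case analysis. For $j=-1$ we have $h_{-1,0}=\chi_{[0,1)}$, so the factor equals $1-z_i\in(0,1]$, which is nonzero and bounded in modulus by $1=2^{-\max(j_i,0)}$. For $j\ge 0$ the function $h_{j,m}$ is supported on $I_{j,m}=[2^{-j}m,2^{-j}(m+1))$ and integrates to zero over that interval. Hence if $z\le 2^{-j}m$ the region $x>z$ contains all of $I_{j,m}$ and the integral vanishes, while if $z\ge 2^{-j}(m+1)$ the region $x>z$ misses the support entirely and the integral again vanishes. The only surviving case is $z$ in the interior $(2^{-j}m,2^{-j}(m+1))$ of $I_{j,m}$, where $\int_z^1 h_{j,m}$ is a partial integral of a $\pm 1$ function over a set of length strictly less than $2^{-j}$; a direct evaluation on the two half-intervals shows its modulus is at most $2^{-j-1}\le 2^{-\max(j,0)}$.

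Finally I would combine the factors. The product vanishes as soon as one coordinate factor vanishes, which by the above happens exactly when some $z_i$ (with $j_i\ge 0$) lies outside the interior of $I_{j_i,m_i}$, i.e. when $\bsz$ is not in the interior of $I_{\bsj,\bsm}$; this is the first assertion. When $\bsz$ does lie in the interior, every factor is bounded by $2^{-\max(j_i,0)}$, so
\[ |\langle g,h_{\bsj,\bsm}\rangle| \le \prod_{i=1}^d 2^{-\max(j_i,0)} = 2^{-|\bsj|}, \]
which yields the claimed estimate. The argument is entirely elementary; the only point requiring care is the bookkeeping at level $j_i=-1$, where the convention $|\bsj|=\sum_i\max(j_i,0)$ makes the bound $2^{-\max(j_i,0)}=1$ the correct one and where the factor $1-z_i$ never vanishes, so such coordinates impose no interior constraint. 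I do not expect any genuine obstacle beyond keeping the interior/support boundary cases consistent with the definition of $|\bsj|$.
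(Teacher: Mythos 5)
Your proof is correct, and it is worth noting that the paper itself does not prove this lemma at all: it simply cites it as a special case of \cite[Lemma~5.2]{M13b}. Your argument --- writing $g(\bsx)=\prod_{i=1}^d\chi_{(z_i,1)}(x_i)$, factorising the inner product over coordinates, and then doing the one-dimensional case analysis (the factor vanishes when $z_i$ lies outside the interior of $I_{j_i,m_i}$ for $j_i\ge 0$ because $h_{j_i,m_i}$ has mean zero on its support, and is otherwise bounded by $2^{-\max(j_i,0)}$) --- is exactly the standard computation behind that reference, so you have supplied the missing self-contained proof rather than taken a different route. One small caveat on the ``$=0$'' direction: in the corner case $j_i=-1$ and $z_i=0$ the point $\bsz$ fails to lie in the interior of $I_{\bsj,\bsm}$, yet the corresponding factor is $1-z_i=1\ne 0$, so the vanishing statement as literally phrased breaks down on this boundary set. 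This is an imprecision inherited from the lemma's statement, not a flaw in your argument; your proof correctly shows that the coordinates with $j_i=-1$ impose no constraint, and the quantitative bound $|\langle g,h_{\bsj,\bsm}\rangle|\le 2^{-|\bsj|}$, which is all that is used in the paper, holds in every case.
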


Now we have collected all auxiliary results in order to provide the proof of Theorem~\ref{thm_main2}.

\begin{proof}[Proof of Theorem~\ref{thm_main2}]
According to the monotonicity of the $L_p$-norm it suffices to prove the result for $p>1$. Let $\S_d$ be an order $2$ digital $(t,d)$-sequence over $\mathbb{F}_2$ with generating matrices $C_1,\ldots,C_d$, with $C_j = (c_{j,k,\ell})_{k, \ell \ge 1}$ for which $c_{i,k,\ell} = 0$ for all $k > 2 \ell$. Let $N \in \N$ with dyadic expansion $N=2^{n_r}+\cdots+2^{n_1}$ with $n_r>\ldots>n_1\geq 0$. 

We first prove the following claim: For $\mu = 1, \ldots, r$ let
\begin{equation*}
\Q_{2^{n_\mu},d} =\{\boldsymbol{x}_{2^{n_1}+\cdots + 2^{n_{\mu-1}}}, \boldsymbol{x}_{2^{n_1}+\cdots + 2^{n_{\mu-1}} + 1}, \ldots, \boldsymbol{x}_{-1+2^{n_1} + \cdots + 2^{n_\mu}}\},
\end{equation*}
where for $\mu = 1$ we set $2^{n_1} + \ldots + 2^{n_{\mu-1}} = 0$. Then the point set $\P_{N,d}$ consisting of the first $N$ elements of the sequence $\S_d$ is a union of $\Q_{2^{n_\mu}, d}$ for $\mu = 1, \ldots, r$ and $\Q_{2^{n_\mu}, d}$ is a digitally shifted order $2$ digital $(t,n_\mu,d)$-net over $\mathbb{F}_2$ with generating matrices $C_{1,2 n_\mu \times n_\mu},\ldots,C_{d,2 n_\mu \times n_\mu}$, i.e., the left upper $2 n_\mu \times n_\mu$ submatrices of $C_1,\ldots,C_d$. 

For the proof of this claim let $C_{j, \mathbb{N} \times n_\mu}$ denote matrix which consists of the first $n_\mu$ columns of $C_j$. Only the first $2 n_\mu$ rows of $C_{j, \mathbb{N} \times n_\mu}$ can be nonzero since $c_{j,k,\ell} = 0$ for all $k > 2 \ell$ and hence $C_j$ is of the form
$$
C_{j} = \left( \begin{array}{ccc}
           & \vline &  \\
  C_{j,2 n_\mu \times n_\mu} & \vline & D_{j,2 n_\mu \times \mathbb{N}} \\
           & \vline &   \\ \hline
    & \vline & \\
 0_{\mathbb{N} \times n_\mu}  & \vline &  F_{j,\mathbb{N} \times \mathbb{N}} \\
   &   \vline       &
\end{array} \right) \in \mathbb{F}_2^{\mathbb{N} \times \mathbb{N}},
$$  where $0_{\mathbb{N} \times n_\mu}$ denotes the $\mathbb{N} \times n_\mu$ zero matrix. Note that the entries of each column of the   matrix $F_{j,\mathbb{N} \times \mathbb{N}}$ become eventually zero. Any $k \in \{2^{n_1}+\cdots + 2^{n_{\mu -1}}, 2^{n_1}+\cdots + 2^{n_{\mu-1}} + 1, \ldots ,-1+2^{n_1} + \cdots + 2^{n_\mu}\}$ can be written in the form $$k=2 ^{n_1}+\cdots+2^{n_{\mu-1}}+a=2^{n_{\mu-1}} \ell +a$$ with $a \in \{0,1,\ldots,2^{n_\mu}-1\}$ and $\ell=1+2^{n_{\mu}-n_{\mu-1}}+\cdots+2^{n_1-n_{\mu-1}}$ if $\mu > 1$ and $\ell =0$ if $\mu=1$. Hence the dyadic digit vector of $k$ is given by $$\vec{k}=(a_0,a_1,\ldots,a_{n_\mu-1},l_0,l_1,l_2,\ldots)^{\top}=:{\vec{a} \choose \vec{\ell}},$$ where $a_0,\ldots,a_{n_\mu-1}$ are the dyadic digits of $a$ and $l_0,l_1,l_2,\ldots$ are the dyadic digits of $\ell$. With this notation we have
$$C_j \vec{k}=\left( \begin{array}{c} C_{j,2 n_\mu \times n_\mu}  \vec{a} \\ 0 \\ 0 \\ \vdots \end{array} \right)  + \left( \begin{array}{c}
             \\
  D_{j,2 n_\mu \times \mathbb{N}}  \\
          \\ \hline  \\
 F_{j,\mathbb{N} \times \mathbb{N}}  \\
   \end{array} \right) \vec{\ell}.$$ For the point set $\Q_{2^{n_\mu},d}$ under consideration, the vector
\begin{equation}\label{dig_shift}
\vec{\sigma}_{\mu,j}:=\left( \begin{array}{c}
             \\
  D_{j,\alpha n_\mu \times \mathbb{N}}  \\
          \\ \hline  \\
 F_{j,\mathbb{N} \times \mathbb{N}}  \\
   \end{array} \right) \vec{\ell}
\end{equation}
is constant and its components become eventually zero (i.e., only a finite number of components is nonzero). Furthermore, $C_{j,2 n_\mu \times n_\mu}  \vec{a}$ for $a=0,1,\ldots,2^{n_\mu}-1$ and $j=1,\ldots,s$ generate an order 2 digital $(t,n_\mu,s)$-net over $\mathbb{F}_2$ (which is also an order 1 digital $(t,n_\mu,d)$-net over $\mathbb{F}_2$, which follows from \cite[Proposition~1]{D09}).

This means that the point set $\Q_{2^{n_\mu},d}$ is a digitally shifted order 2 digital $(t,n_\mu,d)$-net over $\mathbb{F}_2$ with generating matrices $C_{1, 2 n_\mu \times n_\mu},\ldots, C_{d, 2 n_\mu \times n_\mu}$ and hence the claim is proven.


According to Proposition~\ref{haarlpnorm} with $f = D^{N}_{\S_d}$ we have 
\begin{equation}\label{bdlpdisc1}
(L_{p,N}(\mathcal{S}_d))^2 \ll_{p,d} \sum_{\bsj\in\N_{-1}^d} 2^{2|\bsj|(1-1/\bar{p})}\left(\sum_{\bsm\in\D_{\bsj}} |\langle D_{\mathcal{S}_d}^N,h_{\bsj,\bsm}\rangle|^{\bar{p}}\right)^{2/\bar{p}},
\end{equation}
where $\bar{p}=\max(p,2)$.

Now we split up the sum in \eqref{bdlpdisc1} according to the size $|\bsj|$ of the involved dyadic intervals. Let us consider ``small'' dyadic intervals first, so let $|\bsj|+t/2\geq \ld N$. We calculate $|\langle D_{\S_d}^N,h_{\bsj,\bsm}\rangle|$. Choose $n \in \mathbb{N}$ such that $2^{n-1} < N \le 2^n$. Then the point set $\P_{N,d}$ of the first $N$ elements of $\S_d$ is a subset of $\P_{2^n,d}$ consisting of the first $2^n$ elements of $\S_d$. From the construction of $\S_d$ it follows that $\P_{2^n,d}$ is an order $2$ digital $(t,n,d)$-net over $\mathbb{F}_2$. Therefore, according to Lemma~\ref{fairint}, in an interval $I_{\bsj,\bsm}$ there are at most $2^{\lceil t/2\rceil}$ points of $\P_{2^n,d}$ and hence  there are at most $2^{\lceil t/2\rceil}$ points of $\P_{N,d}$ in $I_{\bsj,\bsm}$. Hence we get from Lemmas~\ref{lem_haar_coeff_vol} and \ref{lem_haar_coeff_number}
\begin{align} \label{haarcoeffwithpoints}
 |\langle D_{\S_d}^N,h_{\bsj,\bsm}\rangle| \le \frac{1}{N} \frac{2^{\lceil t/2 \rceil}}{2^{|\bsj|}}+\frac{1}{2^{2|\bsj|}}  \ll \frac{1}{N} \frac{2^{t/2}}{2^{|\bsj|}}.
\end{align}
The estimate \eqref{haarcoeffwithpoints} will be applied to dyadic intervals on level $\bsj$ which contain points from $\P_{N,d}$. The cardinality of such intervals is at most $N$. At least $2^{|\bsj|}-N$ contain no points of $\P_{N,d}$, hence in such cases we get from Lemma~\ref{lem_haar_coeff_vol} and \ref{lem_haar_coeff_number}
\begin{align} \label{haarcoeffwithoutpoints}
 |\langle D_{\S_d}^N,h_{\bsj,\bsm}\rangle| \ll \frac{1}{2^{2|\bsj|}}.
\end{align}

Now we estimate the terms of \eqref{bdlpdisc1} for which $|\bsj|+t/2 \ge \ld N$. Applying Minkowski's inequality and inserting \eqref{haarcoeffwithpoints} and \eqref{haarcoeffwithoutpoints} we obtain 
\begin{eqnarray}\label{bd_small_int}
\lefteqn{\sum_{|\bsj|+t/2 \geq \ld N} 2^{2|\bsj|(1-1/\bar{p})} \left(\sum_{\bsm\in \D_{\bsj}}|\langle D_{\S_d}^N,h_{\bsj,\bsm}\rangle|^{\bar{p}}\right)^{2/\bar{p}}}\nonumber\\
&\ll & \sum_{|\bsj|+t/2 \geq \ld N} 2^{2|\bsj|(1-1/\bar{p})} \left(N \frac{1}{N^{\bar{p}}} \frac{2^{\bar{p}t/2}}{2^{\bar{p}|\bsj|}}\right)^{2/\bar{p}} +\sum_{|\bsj|+t/2 \geq \ld N} 2^{2|\bsj|(1-1/\bar{p})} \left(\frac{(2^{|\bsj|}-N)}{2^{2\bar{p}|\bsj|}}\right)^{2/\bar{p}}\nonumber \\
&\le& N^{2/\bar{p}-2} 2^{t} \sum_{|\bsj|+t/2 \geq \ld N} \frac{1}{2^{2|\bsj|/\bar{p}}} + \sum_{|\bsj|+t/2 \geq \ld N} \frac{1}{2^{2|\bsj|}}\nonumber \\
&\ll& N^{2/\bar{p}-2} 2^{t} N^{-2/\bar{p}} 2^{t/\bar{p}} (\ld N)^{d-1} + \frac{(\log N)^{d-1}}{N^2} 2^t\nonumber \\
&\le& 2^{2t} \frac{(\log N)^{d-1}}{N^2}.
\end{eqnarray}

We now turn to the more demanding case of ``large'' intervals where $|\bsj|+t/2<\ld N$. More precisely assume that we have
\[ n_{\mu} \le |\bsj|+t/2 < n_{\mu+1} \]
for some $\mu\in \{0, 1,\ldots,r\}$, where we set $n_0 = 0$ and $n_{r+1} = \ld N $. For $\kappa \in \{\mu+1,\ldots , r\}$ we use the estimation from Lemma~\ref{haarcoeffdignets} to get
\[ |\langle D_{\Q_{2^{n_\kappa},d}},h_{\bsj,\bsm}\rangle| \ll  \frac{(2n_\kappa - t - 2|\bsj|)^{d-1}}{2^{2n_\kappa - t}}, \]
while for $M = 2^{n_\mu}+\cdots+2^{n_1}$, with $\mu > 0$, we have according to \eqref{haarcoeffwithpoints} with $\widetilde{\Q}_{M,d} = \bigcup_{i=1}^{\mu} \Q_{2^{n_i}, d}$ that
\[ |\langle D_{\widetilde{\Q}_{M,d}},h_{\bsj,\bsm}\rangle| \ll \frac{1}{M} \frac{1}{2^{|\bsj|-t/2}}, \]
since $|\bsj|+t/2\ge\ld M$. If $\mu = 0$, this case does not occur.

Using the linearity of the discrepancy function and the triangle inequality leads to
\begin{align}\label{asdf}
 |\langle D_{\S_d}^N,h_{\bsj,\bsm}\rangle| &\le \frac{M}{N} |\langle D_{\widetilde{\Q}_{M,d} },h_{\bsj,\bsm}\rangle| + \frac{1}{N}\sum_{\kappa=\mu+1}^r 2^{n_\kappa} |\langle D_{\Q_{2^{n_\kappa}, d} },h_{\bsj,\bsm}\rangle|\nonumber\\
 &\ll \frac{1}{N}\left( \frac{2^{t/2}}{2^{|\bsj|}} + \sum_{\kappa=\mu+1}^r  \frac{(2n_\kappa - t - 2|\bsj|)^{d-1}}{2^{n_\kappa-t}}\right)\nonumber\\
 &\ll \frac{2^t}{N} \left(\frac{1}{2^{|\bsj|}} + \sum_{k=0}^{\infty}  \frac{(2n_{\mu+1} +2k - t - 2|\bsj|)^{d-1}}{2^{n_{\mu+1}+k}}\right)\nonumber\\
 &\ll \frac{2^t}{N} \left(\frac{1}{2^{|\bsj|}} +  \frac{(2n_{\mu+1} - t - 2|\bsj|)^{d-1}}{2^{n_{\mu+1}}}\right),
\end{align}
where we used \cite[Lemma~13.24]{DP10} in the last step.

Now we use this bound to estimate the terms of \eqref{bdlpdisc1} for which $|\bsj|+t/2 < \ld N$. We obtain from Minkowski's inequality, Lemma~\ref{index_dim_red} and Lemma~\ref{index_dim_red_log}
\begin{eqnarray}\label{bd_large_int} 
\lefteqn{\sum_{|\bsj|+t/2 < \ld N} 2^{2|\bsj|(1-1/\bar{p})} \left(\sum_{\bsm\in \D_{\bsj}}|\langle D_{\S_d}^N,h_{\bsj,\bsm}\rangle|^{\bar{p}}\right)^{2/\bar{p}}} \nonumber \\
&= & \sum_{\mu=0}^r \sum_{n_{\mu} \le |\bsj|+t/2 < n_{\mu+1}} 2^{2|\bsj|(1-1/\bar{p})} \left(\sum_{\bsm\in \D_{\bsj}}|\langle D_{S_d}^N,h_{\bsj,\bsm}\rangle|^{\bar{p}}\right)^{2/\bar{p}}\nonumber \\
&\ll &  \sum_{\mu=0}^r \sum_{n_{\mu} \le |\bsj|+t/2 < n_{\mu+1}} 2^{2|\bsj|(1-1/\bar{p})} 2^{2|\bsj|/\bar{p}} \, \frac{2^{2t}}{N^2} \left( \frac{1}{2^{|\bsj|}} +  \frac{(2n_{\mu+1} - t - 2|j|)^{d-1}}{2^{n_{\mu+1}}} \right)^2\nonumber \\
&= & \frac{2^{2t}}{N^2}  \sum_{\mu=0}^r \sum_{n_{\mu} \le |\bsj|+t/2 < n_{\mu+1}} 2^{2|\bsj|} \left( \frac{1}{2^{2|\bsj|}} + 2 \frac{(2n_{\mu+1} - t - 2|\bsj|)^{d-1}}{2^{|\bsj|+n_{\mu+1}}}\right.\nonumber \\
&& \left.\hspace{8cm} +  \frac{(2n_{\mu+1} - t - 2|\bsj|)^{2d-2}}{2^{2n_{\mu+1}}} \right)\nonumber \\
&= & \frac{2^{2t}}{N^2}  \sum_{\mu=0}^r \left( \sum_{n_{\mu} \le |\bsj|+t/2 < n_{\mu+1}} 1 + \frac{2}{2^{n_{\mu+1}}} \sum_{n_{\mu}< |\bsj|+t/2 \le n_{\mu+1}} 2^{|\bsj|}(2n_{\mu+1} - t - 2|\bsj|)^{d-1}\right.\nonumber \\ 
& &\hspace{3cm} \left. + \frac{1}{2^{2n_{\mu+1}}} \sum_{n_{\mu} \le |\bsj|+t/2 < n_{\mu+1}} 2^{2|\bsj|}(2n_{\mu+1} - t - 2|\bsj|)^{2d-2} \right)\nonumber \\
&\ll & \frac{2^{2t}}{N^2}  \sum_{\mu=0}^r \left((\log N)^{d-1} (n_{\mu+1}-n_{\mu}) + \frac{2^{n_{\mu+1}-t/2}}{2^{n_{\mu+1}}}  (\log N)^{d-1}+ \frac{2^{2n_{\mu+1}-t}}{2^{2n_{\mu+1}}}  (\log N)^{d-1} \right)\nonumber \\
&\le & \frac{2^{2t}}{N^2} \left( (\log N)^d + 2(\log N)^{d-1} (r+1) \right)\nonumber \\
&\ll & 2^{2t} \, \frac{(\log N)^d}{N^2}.
\end{eqnarray}
Combining \eqref{bd_small_int} and \eqref{bd_large_int} we obtain $$(L_{p,N}(\S_d))^2 \ll_{p,d} 2^{2t} \frac{(\log N)^{d}}{N^2}.$$ Now the result follows by taking square roots.
\end{proof}

\addcontentsline{toc}{chapter}{References}


\noindent {\sc Josef Dick} 

\noindent School of Mathematics and Statistics, The University of New South Wales, Sydney NSW 2052, Australia \\
email: josef.dick(AT)unsw.edu.au\\

\noindent {\sc Aicke Hinrichs}

\noindent Institut f\"ur Funktionalanalysis, Johannes Kepler Universit\"at Linz, Altenbergerstra{\ss}e 69, 4040 Linz, \"Osterreich\\
email: aicke.hinrichs(AT)jku.at\\

\noindent {\sc Lev Markhasin}

\noindent Institut f\"ur Stochastik and Anwendungen, Universit\"at Stuttgart, Pfaffenwaldring 57, 70569 Stuttgart, Deutschland\\
email: lev.markhasin(AT)mathematik.uni-stuttgart.de\\

\noindent {\sc Friedrich Pillichshammer}

\noindent Institut f\"ur Finanzmathematik und angewandte Zahlentheorie, Johannes Kepler Universit\"at Linz, Altenbergerstra{\ss}e 69, 4040 Linz, \"Osterreich\\
email: friedrich.pillichshammer(AT)jku.at

\end{document}